\theoremstyle{plain}
\newtheorem{theorem}{Theorem}[section]
\newtheorem{proposition}[theorem]{Proposition}
\newtheorem{lemma}[theorem]{Lemma}
\theoremstyle{definition}
\newtheorem{definition}[theorem]{Definition}
\newtheorem{example}[theorem]{Example}
\newtheorem{remark}[theorem]{Remark}
\theoremstyle{remark}
\newcommand\RR{\mathbb{R}}
\newcommand\by{\boldsymbol{y}}
\newcommand\byi{\boldsymbol{y_i}}
\newcommand\byj{\boldsymbol{y_j}}
\renewcommand\bf{\boldsymbol{f}}
\newcommand\bk{\boldsymbol{k}}
\newcommand\bx{\boldsymbol{x}}
\newcommand\bJ{\boldsymbol{J}}
\newcommand{\mK}{\mathcal{K}}
\newcommand{\mKe}{\mK_{\RR}}
\newcommand{\mKd}{\mK_{\rm{disg}}}
\newcommand{\dK}{\mathcal{K}_{\RR\text{-disg}}}
\newcommand{\mJ}{\mathcal{J}}
\newcommand{\mJe}{\mJ_{\RR}}
\newcommand{\mS}{\mathcal{S}}
\newcommand{\mSG}{\mathcal{S}_G}
\DeclareMathOperator{\spn}{span}
\noindent\makebox[0mm][r]{$\bullet$}}
\begin{document}

\title{On the Connectivity of the Disguised Toric Locus of a Reaction Network}

\author[1]{
         Gheorghe Craciun%
}
\author[2]{
        Abhishek Deshpande%
}
\author[3]{
        Jiaxin Jin%
}
\affil[1]{\small Department of Mathematics and Department of Biomolecular Chemistry, University of Wisconsin-Madison}
\affil[2]{Center for Computational Natural Sciences and Bioinformatics, \protect \\
 International Institute of Information Technology Hyderabad}
\affil[3]{\small Department of Mathematics, The Ohio State University}

\date{}

\maketitle

\begin{abstract}
\noindent
Complex-balanced mass-action systems are some of the most important types of mathematical models of reaction networks, due  to their widespread use in applications, as well as their remarkable stability properties. We study the set of positive parameter values (i.e., reaction rate constants) of a reaction network $G$ that, according to mass-action kinetics,  generate dynamical systems that can be realized as complex-balanced systems, possibly by using a different graph $G'$. This set of parameter values is called  the \emph{disguised toric locus} of $G$. 
%We use some previously defined homeomorphisms to show that the toric locus and its subsets have a product structure. 
The $\RR$-\emph{disguised toric locus} of $G$ is defined analogously, except that the parameter values are allowed to take on any real values. We  prove that the disguised toric locus of $G$ is path-connected, and the $\RR$-disguised toric locus of $G$ is also path-connected. We also show that the closure of the disguised toric locus of a reaction network contains the union of the disguised toric loci of all its subnetworks.
\end{abstract}

\tableofcontents

\section{Introduction}

Dynamical systems generated by reaction networks can exhibit a very wide range of complex dynamic behaviors, such as multistability,  limit cycles, chaotic dynamics, but also persistence and global stability~\cite{yu2018mathematical}. A particular class of reaction systems called {\em complex-balanced systems} are known to display very stable behavior. In particular, complex-balanced  systems admit strictly convex Lyapunov functions that guarantee local asymptotic stability of their positive steady states. Further, these dynamical systems are conjectured to be globally stable; this property has been proved under some additional assumptions~\cite{yu2018mathematical, anderson2011proof, gopalkrishnan2014geometric, craciun2015toric, feinberg2019foundations}. 

An object of particular interest is the set in the space of network parameters that generates complex-balanced dynamical systems. This set is called the \emph{toric locus} and has been studied in depth using linear algebra and  algebraic geometry, see for example~\cite{CraciunDickensteinShiuSturmfels2009}. There exists a larger set in the space of network parameters that generates dynamical systems that can be \emph{realized} by complex-balanced systems. This set is called the \emph{disguised toric locus}. In this paper, we focus on the disguised toric locus. 
%We construct homeomorphisms that induce a product structure on the toric locus and its subsets. 
In particular, we  show that the disguised toric locus is path-connected. Note that other important subsets of the parameter space of a network have also been studied; in particular, the connectivity of the {\em multistationarity locus} was investigated recently in~\cite{telek_feliu_2023}.

Our paper is structured as follows: In Section~\ref{sec:reaction_network} we introduce reaction networks and some basic terminology associated with them. In Section~\ref{sec:complex_balanced} we introduce the notions of toric dynamical systems and flux systems. In Section~\ref{sec:toric_locus}, we introduce the \emph{toric locus} and analyze its properties. In particular, we establish some homeomorphisms in the context of the toric locus which imply that the toric locus and its subsets have a product structure. In addition, we show that given a reaction network, the union of the disguised toric loci of all its subnetworks is contained in the closure of the disguised toric locus of the original network. In Section~\ref{sec:main_result}, we introduce the \emph{disguised toric locus} of a reaction network and prove that it is path-connected. In Section~\ref{sec:discussion}, we recapitulate our main results and chalk out directions for future research.

\medskip

\textbf{Notation.} We will use the following notation throughout the paper:
\begin{itemize}

\item $\mathbb{R}_{\geq 0}^n$ and $\mathbb{R}_{>0}^n$: the set of vectors in $\mathbb{R}^n$ with non-negative (resp. positive) entries.

%\item $\mathbb{R}_{>0}^n$: the set of vectors in $\mathbb{R}^n$ with positive entries.

\item Given two vectors $\bx = (x_1, \ldots, x_n)^{\intercal}\in \RR^n_{>0}$ and $\by = (y_1, \ldots, y_n)^{\intercal} \in \RR^n$, we define:
\begin{equation} \notag
\bx^{\by} = x_1^{y_{1}} x_2^{y_{2}} \ldots x_n^{y_{n}}.
\end{equation}

\end{itemize}

%\newpage

\section{Background}
\label{sec:background}

The goal of this section is to recall some terminology related to reaction networks.

\subsection{Reaction Networks}
\label{sec:reaction_network}

\begin{definition}[\cite{craciun2015toric,craciun2020endotactic,craciun2019polynomial}]
\begin{enumerate}[(a)] 
\item A reaction network $G = (V, E)$, also called a \emph{Euclidean embedded graph} (or an \emph{E-graph}), is a directed graph in $\RR^n$, where $V \subset \mathbb{R}^n$ represents a finite set of \emph{vertices} and $E \subseteq V \times V$ represents the set of \emph{edges}, and such that there are no isolated vertices and no self-loops.

\item Given a reaction network $G = (V, E)$, an  edge $(\by, \by') \in E$, also denoted by $\by \to \by'$, is called a {\em reaction} in the   network. For every reaction $\by \rightarrow \by'\in E$, the vertex $\by$ is called the \emph{source vertex}, and the vertex $\by'$ is called the \emph{target vertex}. Further, we refer to the vector~$\by' - \by$ as the {\em reaction vector} of this reaction.

\item The \emph{stoichiometric subspace} $S_G$ of the reaction network $G = (V, E)$ is the linear subspace generated by its reaction vectors, i.e., 
\begin{eqnarray}
S_G : = \spn \{ \by' - \by: \by \rightarrow \by' \in E \}.
\end{eqnarray}
\end{enumerate}
\end{definition}

\begin{definition}
Let $G=(V, E)$ and $\tilde{G} = (\tilde{V}, \tilde{E})$ be two E-graphs.
\begin{enumerate}[(a)] 
\item
% Every maximal connected component of $G = (V, E)$ is called a \emph{linkage class} of $G$. 
A connected component of $G$ is said to be \emph{strongly connected} if every edge in that component is part of an oriented cycle. Further, $G = (V, E)$ is \emph{weakly reversible} if all connected components of $G$ are strongly connected. 

\item $G=(V, E)$ is a \emph{complete graph} if for every pair of distinct vertices $\by, \by' \in V$, $\by \rightarrow \by' \in E$.

\item $G$ is a \emph{subgraph} of $\tilde{G}$ (denoted by $G \subseteq \tilde{G}$) if $V \subseteq \tilde{V}$ and $E \subseteq \tilde{E}$. Further, we let $G \sqsubseteq \tilde{G}$ denote that $G$ is weakly reversible and $G \subseteq \tilde{G}$.
\end{enumerate}
\end{definition}

Given any E-graph $G = (V, E)$, a complete graph (denoted by $G_{\rm{comp}}$) can be obtained by connecting every pair of distinct vertices of $G$. By Definition, $G \subseteq G_{\rm{comp}}$. Further, $G \sqsubseteq G_{\rm{comp}}$ if $G$ is weakly reversible.

% \begin{definition}
% The \emph{stoichiometric subspace} of $G$ is given by the following:
% \begin{eqnarray}
% S_G : = \spn \{ \by' - \by': \by \rightarrow \by \in E \}.
% \end{eqnarray}
% \end{definition}

\begin{example}

Figure~\ref{fig:e-graph} shows a few examples of reaction networks represented as E-graphs.

\begin{figure}[!ht]
\centering
\includegraphics[scale=0.4]{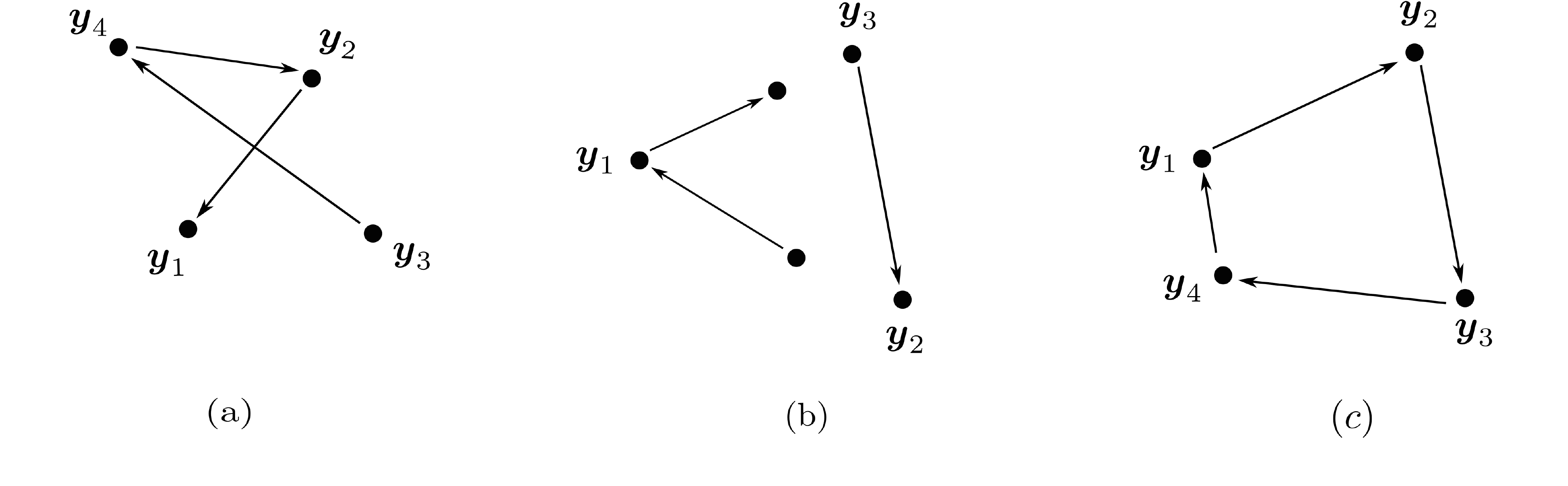}
\caption{\small (a) This reaction network consists of a single connected component. (b) This reaction network consists of two connected components. (c) This reaction network is weakly reversible and contains one connected component. The stoichiometric subspace corresponding to all three networks is $\mathbb{R}^2$.}
\label{fig:e-graph}
\end{figure} 
\qed
\end{example}

% We now turn our attention to the dynamics exhibited by a network. 
% Associated with each reaction is a rate constant. 
% In particular, we will denote a rate vector by  $\bk = (k_{\by \rightarrow \by'})_{\by\rightarrow \by' \in E} \in \mathbb{R}_{>0}^{E}$
% %
% where $k_{\by \rightarrow \by'} >0 $  is the rate constant of the reaction $\by\rightarrow \by'\in E$. Under mass-action kinetics~\cite{feinberg1979lectures,voit2015150,guldberg1864studies,yu2018mathematical,gunawardena2003chemical,adleman2014mathematics}, the associated dynamical system generated by $(G, \bk)$ is given by
% \begin{equation} \label{eq:mass_action}
%  \frac{\mathrm{d} \bx}{\mathrm{d} t} 
% = \sum_{\by\rightarrow \by' \in E}k_{\by \rightarrow \by'} \bx^{\by}(\by' - \by).
% \end{equation}
% The \emph{stoichiometric subspace} of $G$ is given by the following:
% \begin{eqnarray}
% S_G : = \spn \{ \by' - \by': \by \rightarrow \by \in E \}.
% \end{eqnarray}
% %
% Given $\bx_0\in\mathbb{R}^n_{>0}$, the \emph{stoichiometric compatibility class} of $\bx_0$ is given by:
% \begin{eqnarray}
% S_{\bx_0} : = (\bx_0 + S_G)\cap\mathbb{R}^n_{>0}.
% \end{eqnarray}

We now turn our attention to the dynamics exhibited by a network. 

\begin{definition}[\cite{yu2018mathematical,feinberg1979lectures,voit2015150,guldberg1864studies,gunawardena2003chemical,adleman2014mathematics}]
Let $G=(V, E)$ be an E-graph.
For each $\by\rightarrow \by' \in E$, let $k_{\by \rightarrow \by'} >0$ be the associated reaction rate constant and let $\bk = (k_{\by \rightarrow \by'})_{\by\rightarrow \by' \in E} \in \mathbb{R}_{>0}^{E}$ be the reaction rate vector. Under mass-action kinetics, the dynamical system generated by $(G, \bk)$ is 
\begin{equation} \label{eq:mass_action}
 \frac{\mathrm{d} \bx}{\mathrm{d} t} 
= \sum_{\by\rightarrow \by' \in E}k_{\by \rightarrow \by'} \bx^{\by}(\by' - \by).
\end{equation}
\end{definition}

Given a point $\bx_0 \in \mathbb{R}^n_{>0}$, the \emph{stoichiometric compatibility class} of $\bx_0$ is given by
\begin{eqnarray}
S_{\bx_0} : = (\bx_0 + S_G)\cap\mathbb{R}^n_{>0}.
\end{eqnarray}
If the positive orthant is forward invariant, the stoichiometric compatibility class is an invariant polyhedron~\cite{sontag2001structure}.

\begin{definition} 
\label{def:mas_realizable}
Consider the following dynamical system
\begin{equation} \label{eq:realization_ode}
 \frac{\mathrm{d} \bx}{\mathrm{d} t} 
= \bf (\bx).
\end{equation}
We say the dynamical system is \emph{$\RR$-realizable} on an E-graph $G=(V,E)$, if there exists a rate vector $\bk \in \mathbb{R}^{E}$, such that 
\begin{equation} \label{eq:realization}
\bf (\bx) =
\sum_{\by \rightarrow \by'\in E} k_{\by \rightarrow \by'} \bx^{\by}(\by' - \by).
% \ \text{for every } 
% \bx \in \mathbb{R}_{>0}^{n}.
\end{equation}
Further, if $\bk \in \mathbb{R}^{E}_{>0}$, this dynamical system is said to be \emph{{realizable} on $G$.}
\end{definition}

\begin{definition}[\cite{horn1972general,craciun2008identifiability,anderson2020classes}]
\label{def:de}
Two mass-action systems $(G, \bk)$ and $(\tilde{G}, \tilde{\bk})$ are said to be \emph{dynamically equivalent} (denoted by 
$(G, \bk) \sim (\tilde{G}, \tilde{\bk})$), if for every vertex\footnote{\label{footnote1} Note that when $\by_0 \not\in V$ or $\by_0 \not\in \tilde{V}$, that side is considered as an empty sum, which is zero.} $\by_0 \in V \cup \tilde{V}$,
\begin{equation}
\label{eq:DE}
\sum_{\by_0 \to \by \in E} k_{\by_0  \to \by} (\by - \by_0) 
= \sum_{\by_0 \to \by' \in \tilde{E}} \tilde{k}_{\by_0  \to \by'}  (\by' - \by_0).
\end{equation}
\end{definition} 

\begin{remark}
Suppose $(G, \bk)$ and $(\tilde{G}, \tilde{\bk})$ are two dynamically equivalent mass-action systems. Then $(G, \bk)$ is realizable on $\tilde{G}$ and $(\tilde{G}, \tilde{\bk})$ is realizable on $G$.
\end{remark}

% \section{Complex-balanced systems and flux systems}

\subsection{Toric Dynamical Systems and the Disguised Toric Locus}
\label{sec:complex_balanced}

The goal of this section is to recall some properties of \emph{complex-balanced systems} (also known as \emph{toric dynamical systems}),
%and flux systems. 
% We first note a few definitions. Then we
and then define the \emph{disguised toric locus}, which is the set of the reaction rate vectors that allow complex-balanced realizations under dynamical equivalence.

\begin{definition} 
\label{def:cb_system}

Consider the mass-action system $(G, \bk)$ as follows 
\begin{equation} \notag
 \frac{\mathrm{d} \bx}{\mathrm{d} t} 
= \sum_{\by\rightarrow \by' \in E}k_{\by \rightarrow \by'} \bx^{\by}(\by' - \by).
\end{equation}
A point $\bx^* \in \mathbb{R}_{>0}^n$ is called a \emph{positive steady state} of $(G, \bk)$ if it satisfies
\[ 
\sum_{\by \rightarrow \by' \in E}k_{\by \rightarrow \by'} (\bx^*)^{\by}(\by' - \by)
= \mathbf{0}.
\]
A positive steady state $\bx^* \in \RR_{>0}^n$ is called a \emph{complex-balanced steady state} of $(G, \bk)$, if for every vertex $\by \in V$, 
\begin{eqnarray} \notag
\sum_{\by \to \by' \in E} k_{\by \to \by'} (\bx^*)^{\by}
= \sum_{\by' \to \by \in E}k_{\by' \to \by}
(\bx^*)^{\by'}.
\end{eqnarray}
If the mass-action system $(G, \bk)$ admits a complex-balanced steady state, then it is called a \emph{complex-balanced system} or a \emph{toric dynamical system}.
\end{definition}

% We now collect a useful remark that connects weakly reversible E-graphs with complex-balanced realizations.

% \begin{remark}\label{rmk:graph_KJ}
% Let $G=(V, E)$ be a weakly reversible E-graph. Then there exists a rate vector $\bk$ such that $(G,\bk)$ is complex-balanced~\cite{craciun2020efficient}. This implies that $\mK (G) \neq \emptyset$.
% \end{remark}

\begin{definition} 
Let $G=(V, E)$ be an E-graph. The \emph{toric locus} on $G$ is given by 
%the following:
\begin{equation} \notag
\mK (G) := \{ \bk \in \mathbb{R}_{>0}^{E} \ \big| \ \text{the mass-action system generated by } (G, \bk) \ \text{is complex-balanced} \}.
\end{equation}
Further, for any E-graph $\tilde{G} =(\tilde{V}, \tilde{E})$, we define the set $\mKe(\tilde{G},G)$ as
\begin{equation} \notag
\mKe (\tilde{G},G) := \{ \tilde{\bk} \in \mK (\tilde{G}) \ \big| \ \text{the mass-action system } (\tilde{G}, \tilde{\bk}) \ \text{is $\RR$-realizable on } G \}.
\end{equation}
We also define the set $\mK(\tilde{G},G)$ as
%\emph{$\tilde{G}$-realizable toric locus} on $G$ as
\begin{equation} \notag
\mK (\tilde{G},G) := \{ \tilde{\bk} \in \mK (\tilde{G}) \ \big| \ \text{the mass-action system } (\tilde{G}, \tilde{\bk}) \ \text{is realizable on } G \}.
\end{equation}
From the definition, it is clear that $\mK (G, \tilde{G}) \subset \mKe (G, \tilde{G})$.
\end{definition}

% \begin{remark}
% It follows that $\mK(\hat{G}, G)\subseteq \mK_{\rm{ext}}(\hat{G}, G), \mK_{\rm{disg}}(G, \hat{G})\subseteq \dK(G, \hat{G})$ and $\mK_{\rm{disg}}(G) \subseteq \dK(G)$.
% \end{remark}

Here we present a useful lemma that connects weakly reversible E-graphs with the toric locus.

\begin{lemma}[\cite{feinberg2019foundations}]
\label{lem:graph_KJ}
Let $G=(V, E)$ be an E-graph. 
If $G=(V,E)$ is weakly reversible, then there exists a rate vector $\bk \in \mathbb{R}_{>0}^{E}$, such that $(G,\bk)$ is complex-balanced, i.e., $\mK (G) \neq \emptyset$.
Otherwise, if $G=(V,E)$ is not weakly reversible, then $\mK (G) = \emptyset$.
\end{lemma}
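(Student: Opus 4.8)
The plan is to recast complex balancing as a purely graph-theoretic statement about positive circulations, and then treat the two directions separately. First I would observe that for a fixed point $\bx^* \in \RR^n_{>0}$, setting $f_{\by \to \by'} := k_{\by \to \by'}(\bx^*)^{\by}$ turns the complex-balancing condition at $\bx^*$ into the vertex-balance (circulation) condition $\sum_{\by \to \by' \in E} f_{\by \to \by'} = \sum_{\by' \to \by \in E} f_{\by' \to \by}$ for every $\by \in V$. Since $\bk \in \RR^E_{>0}$ and $(\bx^*)^{\by} > 0$, positivity of $\bk$ is equivalent to positivity of the flux $f$, so the whole lemma reduces to the claim that $G$ admits a positive circulation if and only if $G$ is weakly reversible.

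For the forward implication (weakly reversible $\Rightarrow \mK(G) \neq \emptyset$), I would fix $\bx^* = \mathbf{1}$, so that $f = \bk$ and it suffices to exhibit any positive circulation. Because $G$ is weakly reversible, each connected component is strongly connected, so every edge lies on an oriented cycle. Choosing a finite family of directed cycles whose union is all of $E$ and assigning to each edge the number of chosen cycles through it yields edge weights that are strictly positive (every edge is covered) and balanced at every vertex (each cycle contributes exactly one in-edge and one out-edge at every vertex it visits, and a sum of circulations is again a circulation). Taking $\bk$ equal to these weights makes $\mathbf{1}$ a complex-balanced steady state, hence $\mK(G) \neq \emptyset$. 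Alternatively, one may invoke the Matrix--Tree theorem to write explicit positive tree constants, which is the route in the cited reference.

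For the reverse implication I would argue the contrapositive via a cut argument. Suppose $\mK(G) \neq \emptyset$, pick $\bk$ with complex-balanced steady state $\bx^*$, and form the positive circulation $f$ as above. For any vertex subset $W \subseteq V$, summing the balance equations over $\by \in W$ cancels the edges internal to $W$ and yields that the total flux leaving $W$ equals the total flux entering $W$. Now fix any edge $\be_0 = \by_0 \to \by_0'$ and let $W$ be the set of vertices reachable from $\by_0'$ along directed paths; no edge leaves $W$, so the flux out of $W$ is zero, forcing the flux into $W$ to vanish as well. Since every flux value is positive, $\be_0$ cannot enter $W$, which means $\by_0 \in W$, i.e. $\by_0$ is reachable from $\by_0'$; concatenating that path with $\be_0$ produces an oriented cycle through $\be_0$. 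As $\be_0$ was arbitrary, every edge lies on a cycle, so $G$ is weakly reversible, establishing the contrapositive.

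The main obstacle is the existence direction: one must certify that a positive, vertex-balanced weighting genuinely exists, and the decisive leverage is precisely weak reversibility (every edge on a cycle), which is what makes the cycle-cover construction go through. The necessity direction is comparatively routine once the correct cut set---the forward-reachable cone of $\by_0'$---is selected, since then the balance identity across the cut immediately forbids any positive flux into $W$ from outside.
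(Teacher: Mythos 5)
Your proof is correct. Note that the paper itself offers no proof of this lemma --- it is quoted verbatim from the cited reference \cite{feinberg2019foundations} --- so there is no in-paper argument to compare against; the comparison is with the standard textbook treatment. Your reduction of complex balancing at a fixed $\bx^*\in\RR^n_{>0}$ to the existence of a positive circulation via $f_{\by\to\by'}=k_{\by\to\by'}(\bx^*)^{\by}$ is exactly right, and both halves go through: the cycle-cover construction produces a strictly positive balanced weighting because weak reversibility (in the paper's definition, every edge lies on an oriented cycle) guarantees the cover exists and each simple oriented cycle is itself a circulation; and the cut argument with $W$ taken to be the forward-reachable set of the target vertex correctly forces every edge onto a cycle, which is precisely the paper's definition of weak reversibility. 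The classical route in the cited reference instead obtains existence from the Matrix--Tree theorem (explicit positive tree constants), which yields more --- an explicit point of $\mK(G)$ and machinery reused elsewhere in deficiency theory --- whereas your cycle-decomposition argument is more elementary and entirely combinatorial. Both are valid; yours is arguably the minimal argument needed for the statement as used in this paper.
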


\begin{definition} 
\label{def:de_realizable}
Let $G =(V,E)$ 
%and $\tilde{G} =(\tilde{V}, \tilde{E})$
be an E-graph.
\begin{enumerate}[(a)]

\item For any E-graph $\tilde{G} =(\tilde{V}, \tilde{E})$,  define the set $\mKd (G, \tilde{G})$ as
\begin{equation} \notag
\begin{split}
\mKd (G, \tilde{G}) := \{ \bk \in \mathbb{R}^{E}_{>0} \ \big| \ 
& \text{the mass-action system} \ (G, \bk) 
\text{ is dynamically equivalent}
\\& \text{to } (\tilde{G}, \tilde{\bk}) 
\text{ for some } \tilde{\bk} \in \mK (\tilde{G}) \}.
%\ \text{is disguised toric on } \tilde{G} \}.
\end{split}
\end{equation}

\item Define the \emph{disguised toric locus} of $G$ as
\begin{equation} \notag
\mK_{\rm{disg}}(G) := \bigcup_{\tilde{G} \sqsubseteq G_{\rm{comp}}} \ \mK_{\rm{disg}}(G, \tilde{G}),
\end{equation}
where the notation $\tilde{G} \sqsubseteq G_{\rm{comp}}$ represents that
$\tilde{G}$ is a weakly reversible subgraph of $G_{\rm{comp}}$.

\item For any E-graph $\tilde{G} =(\tilde{V}, \tilde{E})$,  define the set $\dK (G, \tilde{G})$ as
%\emph{disguised $G$-toric locus} on $\tilde{G}$ as
\begin{equation} \notag
\begin{split}
\dK (G, \tilde{G}) := \{ \bk \in \mathbb{R}^{E} \ \big| \ & \text{the dynamical system generated by } (G, \bk) 
\text{ is dynamically}
\\& \text{equivalent to } (\tilde{G}, \tilde{\bk}) 
\text{ for some } \tilde{\bk} \in \mK (\tilde{G}) \},
\end{split}
\end{equation}
where the dynamical system generated by $(G, \bk)$ is given by 
equation~\eqref{eq:mass_action}\footnote{
For simplicity, in the rest of this paper, we {\em abuse} the following notation:
Given $\bk \in \mathbb{R}_{>0}^{E}$, we will refer to the  mass-action  system generated by $G$ and $\bk$ as in 
equation~\eqref{eq:mass_action}
% equations~\eqref{eq:realization_ode}-\eqref{eq:realization} 
as ``the mass-action system $(G, \bk)$".
Moreover, we will still refer to this system as ``the mass-action system $(G, \bk)$" even if we have $\bk \in \mathbb{R}^{E}$ instead of $\bk \in \mathbb{R}_{>0}^{E}$.
}. 
Note that $\bk$ here may have non-positive components, and we have $\mKd (G, \tilde{G}) \subset \dK (G, \tilde{G})$.

\item Define the $\RR$-\emph{disguised toric locus} of $G$ as
\begin{equation} \notag
\dK (G) := \bigcup_{\tilde{G} \sqsubseteq G_{\rm{comp}}} \ \dK(G, \tilde{G}).
\end{equation}
where $\tilde{G} \sqsubseteq G_{\rm{comp}}$ represents that
$\tilde{G}$ is a weakly reversible subgraph of $ G_{\rm{comp}}$.
\end{enumerate}
\end{definition}

\begin{remark}
From the definition, it is clear that $\mKd (G) \subseteq \dK(G)$. 
In general, we need both $\mKd (G)$ and $\dK (G)$ to include $\mKd (G,G')$ or $\dK(G, G')$ for {\em any} weakly reversible E-graph $G'$ (i.e., not just for $G' \sqsubseteq G_{\rm{comp}}$). On the other hand, due to results in~\cite{craciun2020efficient}, it turns out that, if a dynamical system generated by $G$ can be realized as toric by some $G'$, then there exists $G'' \sqsubseteq G_{\rm{comp}}$ that {\em also} can give rise to a toric realization of that dynamical system. Therefore, the above assumption that $\tilde{G} \sqsubseteq G_{\rm{comp}}$ still leads to the correct definition. 
\end{remark}

The following is a direct consequence of  the Definition \ref{def:de_realizable}.

\begin{remark}
Consider two E-graphs $G =(V,E)$ and $\tilde{G} =(\tilde{V}, \tilde{E})$. Then 
\begin{enumerate}[(a)]
\item $\mK (\tilde{G}, G)$ is non-empty if and only if $\mKd (G, \tilde{G})$ is non-empty.

\item $\mKe (\tilde{G}, G)$ is non-empty if and only if $\dK (G, \tilde{G})$ is non-empty.
\end{enumerate}
% Then the reaction rate vector $\bk' \in \mK(G', G)$ if and only if there exists a vector $\bk \in \dK (G, G')$.
\end{remark}

The sets $\mK (\tilde{G}, G)$ and $\mKe (\tilde{G}, G)$ are related~\cite{disg_1}. In Section~\ref{sec:toric_locus}, we describe some properties of the toric locus. In Section~\ref{sec:main_result}, we show that the disguised toric locus is path-connected.

\section{Toric Locus}
\label{sec:toric_locus}

In this section, we first present some elementary properties of the toric locus. Then we show some homeomorphisms on the toric locus and its subsets.

\subsection{Some Basic Properties of the Toric Locus}
\label{sec:basic_toric_locus}

\begin{lemma}
\label{lem:fiber}
Let $G = (V, E)$ be a weakly reversible E-graph. Consider a reaction rate vector $\bk^* \in \mK(G)$ and a complex-balanced steady state $\bx^* \in \RR^n_{>0}$ for the mass-action system $(G, \bk^*)$. For any $\bx \in (\bx_0 + \mSG )\cap\mathbb{R}^n_{>0}$, we define
\begin{equation} \label{def:fiber_vector}
\bk_{G}^{*} (\bx, \bx^*)
= \left(k_{\by\rightarrow \by'}^{*} (\bx, \bx^*)\right)_{\by\rightarrow \by'\in E}
= \left(  \frac{k^*_{\by\rightarrow \by'}{(\bx^*)}^{\by}}{\bx^{\by}} \right)_{\by\rightarrow \by'\in E}.
\end{equation}
Then 
\begin{equation} \notag
\bk_{G}^{*} (\bx, \bx^*) \in \mK (G),
\end{equation}
and $\bx$ is a complex-balanced steady state for the mass-action system $(G, \bk_{G}^{*} (\bx, \bx^*))$.
\end{lemma}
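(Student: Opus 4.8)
The plan is to verify the two claims in reverse order: first show that $\bx$ is a complex-balanced steady state of the mass-action system $(G, \bk_{G}^{*}(\bx, \bx^*))$, and then deduce $\bk_{G}^{*}(\bx, \bx^*) \in \mK(G)$ as an immediate consequence. Write $\tilde\bk := \bk_{G}^{*}(\bx, \bx^*)$ for brevity. The guiding observation is that $\tilde{k}_{\by \to \by'} = k^*_{\by \to \by'} (\bx^*)^{\by}/\bx^{\by}$ rescales the rate of each reaction $\by \to \by'$ by a factor $(\bx^*)^{\by}/\bx^{\by}$ depending only on the source vertex $\by$; this is exactly the structure needed to transform the complex-balance equations at $\bx$ into those at $\bx^*$. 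First note that $\tilde\bk \in \RR^E_{>0}$: since $\bk^* \in \RR^E_{>0}$ and both $\bx, \bx^* \in \RR^n_{>0}$, every quantity $k^*_{\by \to \by'}$, $(\bx^*)^{\by}$, and $\bx^{\by}$ is strictly positive, so each $\tilde{k}_{\by \to \by'}$ is positive.

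For the main computation, fix a vertex $\by \in V$ and evaluate the two sides of the complex-balance condition of Definition~\ref{def:cb_system} for $(G, \tilde\bk)$ at the point $\bx$. On the outflow side,
\begin{equation} \notag
\sum_{\by \to \by' \in E} \tilde{k}_{\by \to \by'} \, \bx^{\by} = \sum_{\by \to \by' \in E} k^*_{\by \to \by'} \frac{(\bx^*)^{\by}}{\bx^{\by}} \, \bx^{\by} = \sum_{\by \to \by' \in E} k^*_{\by \to \by'} (\bx^*)^{\by},
\end{equation}
where the factor $\bx^{\by}$ cancels the denominator. Likewise, on the inflow side,
\begin{equation} \notag
\sum_{\by' \to \by \in E} \tilde{k}_{\by' \to \by} \, \bx^{\by'} = \sum_{\by' \to \by \in E} k^*_{\by' \to \by} (\bx^*)^{\by'}.
\end{equation}
But the complex-balance condition for $\bx^*$ under $(G, \bk^*)$ states precisely that these two expressions are equal for every $\by \in V$. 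Hence the complex-balance condition holds for $(G, \tilde\bk)$ at $\bx$, and since $\by$ was arbitrary, $\bx$ is a complex-balanced steady state of $(G, \tilde\bk)$.

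Finally, because the system $(G, \tilde\bk)$ admits the complex-balanced steady state $\bx \in \RR^n_{>0}$, it is by definition a complex-balanced (toric) system, so $\tilde\bk = \bk_{G}^{*}(\bx, \bx^*) \in \mK(G)$, which is the first claim. The argument is a direct verification rather than a deep one, so there is no substantial obstacle; the only points requiring care are confirming that positivity of the new rates is preserved and recognizing that the scaling factor depends solely on the source vertex, which is what makes the cancellation $\bx^{\by} \cdot (\bx^*)^{\by}/\bx^{\by} = (\bx^*)^{\by}$ turn the balance equations at $\bx$ into those at $\bx^*$.
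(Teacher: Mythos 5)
Your proof is correct and follows essentially the same route as the paper's: both verify the complex-balance condition for the rescaled rates at $\bx$ by cancelling $\bx^{\by}$ against the denominator on each side and then invoking the complex-balance identity for $(G,\bk^*)$ at $\bx^*$. Your additional remark that positivity of the new rates is preserved is a harmless (and slightly more careful) addition that the paper leaves implicit.
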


\begin{proof}
Since $\bx^*$ is a complex-balanced steady state for the mass-action system $(G, \bk^*)$, then for every vertex $\by_0 \in V$,
\begin{equation} \label{eq:fiber_1}
\sum_{\by_0 \to \by' \in E} k^*_{\by_0 \to \by'} (\bx^*)^{\by_0}
= \sum_{\by \to \by_0 \in E} k^*_{\by \to \by_0}
(\bx^*)^{\by}.
\end{equation}
Thus, we derive 
\begin{equation} \label{eq:fiber_2}
\sum_{\by_0 \to \by' \in E} k_{\by_0 \to \by'}^{*} (\bx, \bx^*) (\bx)^{\by_0}
= \sum_{\by_0 \to \by' \in E}  \frac{k^*_{\by_0 \rightarrow \by'}{(\bx^*)}^{\by_0}}{\bx^{\by_0}} (\bx)^{\by_0}
= \sum_{\by_0 \to \by' \in E} k^*_{\by_0 \to \by'} (\bx^*)^{\by_0}.
\end{equation}
On the other hand, we compute that
\begin{equation} \label{eq:fiber_3}
\sum_{\by \to \by_0 \in E} k_{\by \to \by_0}^{*} (\bx, \bx^*) \bx^{\by} 
= \sum_{\by \to \by_0 \in E} \frac{k^*_{\by \to \by_0}{(\bx^*)}^{\by}}{\bx^{\by}} \bx^{\by} 
= \sum_{\by \to \by_0 \in E} k^*_{\by \to \by_0}
(\bx^*)^{\by}.
\end{equation}
Together with~\eqref{eq:fiber_1},~\eqref{eq:fiber_2} and~\eqref{eq:fiber_3}, for every vertex $\by_0 \in V$,
\begin{equation} \notag
\sum_{\by_0 \to \by' \in E} k_{\by_0 \to \by'}^{*} (\bx, \bx^*) (\bx)^{\by_0}
= \sum_{\by \to \by_0 \in E} k_{\by \to \by_0}^{*} (\bx, \bx^*) (\bx)^{\by}.
\end{equation}
Therefore, we conclude that $\bx$ is a complex-balanced steady state for the mass-action system $(G, \bk_{G}^{*} (\bx, \bx^*))$ and $\bk_{G}^{*} (\bx, \bx^*) \in \mK (G)$.
\end{proof}

\medskip

Here we show a relation between the toric locus of a
Euclidean embedded graph and the toric locus of its subgraphs.

\begin{proposition}
\label{prop:subtoric}
Let $G = (V, E)$ be a weakly reversible E-graph. For any subgraph $G_i\subseteq G$, we construct the corresponding set of vectors $\hat{\mK} (G_i)$ as
\begin{equation} \label{subtoric_hat}
\hat{\mK} (G_i) := \{ \hat{\bk} \in \RR_{\geq 0}^E \ | \ \bk \in \mK (G_i) \}
\ \text{ with } \
\hat{k}_{\by \to \by'} :=
\begin{cases}
k_{\by \to \by'}, & \text{if } \by \to \by' \in E_i, \\
0, & \text{if } \by \to \by' \notin E_i.
\end{cases}
\end{equation}
Then we have
\begin{equation} \label{subtoric}
\bigcup_{G_i \subseteq G} \hat{\mK} (G_i) \subsetneq \overline{\mK(G)}.
\end{equation}
\end{proposition}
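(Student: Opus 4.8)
The plan is to establish the statement in two separate pieces: the inclusion $\bigcup_{G_i \subseteq G}\hat{\mK}(G_i) \subseteq \overline{\mK(G)}$, and then strictness, by exhibiting a single point of $\overline{\mK(G)}$ that lies outside the union.

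For the inclusion I would first observe that only weakly reversible subgraphs contribute: if $G_i$ is not weakly reversible then $\mK(G_i)=\emptyset$ by Lemma~\ref{lem:graph_KJ}, so $\hat{\mK}(G_i)=\emptyset$. Fix then a weakly reversible $G_i=(V_i,E_i)$ and $\hat{\bk}\in\hat{\mK}(G_i)$ arising from $\bk\in\mK(G_i)$ with complex-balanced steady state $\bx^{*}\in\RR^n_{>0}$ for $(G_i,\bk)$. The key reformulation is that complex balance \emph{at a fixed steady state} is linear in the rate vector: setting $a_{\by\to\by'}:=k_{\by\to\by'}(\bx^{*})^{\by}$ for $\by\to\by'\in E_i$ and $a_{\by\to\by'}:=0$ otherwise produces an edge ``flux'' on $G$ whose in-flux equals its out-flux at every vertex. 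Since $G$ is weakly reversible, Lemma~\ref{lem:graph_KJ} supplies some $\bk^0\in\mK(G)$ with steady state $\bz$, whose induced flux $b_{\by\to\by'}:=k^0_{\by\to\by'}\bz^{\by}$ is strictly positive and conserved at every vertex.

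The central point is that conserved fluxes are closed under addition, so for each $\epsilon>0$ the flux $\boldsymbol{a}+\epsilon\,\boldsymbol{b}$ is conserved and strictly positive on all of $E$; dividing back out by $(\bx^{*})^{\by}$ yields a rate vector $\bk^{(\epsilon)}\in\RR^E_{>0}$ that is complex-balanced on $G$ with steady state $\bx^{*}$, i.e. $\bk^{(\epsilon)}\in\mK(G)$. Letting $\epsilon\to 0^{+}$, the entries on $E_i$ converge to $k_{\by\to\by'}$ and the entries off $E_i$ converge to $0$, so $\bk^{(\epsilon)}\to\hat{\bk}$ and hence $\hat{\bk}\in\overline{\mK(G)}$. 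This is a short verification once the flux reformulation is in place.

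The strictness is the more delicate part, and I expect it to be the main obstacle. The target is a single $\bk^{\flat}\in\overline{\mK(G)}$ whose support is \emph{not} weakly reversible: every element of $\hat{\mK}(G_i)$ has support exactly $E_i$ (all of its coordinates on $E_i$ are positive), so any point with non-weakly-reversible support cannot lie in the union. One must be careful, because approaching the boundary of $\mK(G)$ with a \emph{fixed} steady state only yields conserved limiting fluxes, whose supports are automatically weakly reversible (a nonnegative conserved flux decomposes into oriented cycles); the steady state must therefore be sent to the boundary. To do this I would pick an extreme point $\by^{*}$ of $\mathrm{conv}(V)$ and a vector $\bv$ with $\langle \bv,\by^{*}\rangle<\langle \bv,\by\rangle$ for all other $\by\in V$, then apply the construction of Lemma~\ref{lem:fiber} (whose computation holds for any $\bx\in\RR^n_{>0}$) along the point $\bx$ with $x_j=x^{*}_j e^{t v_j}$, obtaining rate vectors in $\mK(G)$ whose $\by\to\by'$ entry scales like $e^{-t\langle \bv,\by\rangle}$. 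Using that $\mK(G)$ is a cone, I rescale by the scalar $e^{t\langle \bv,\by^{*}\rangle}$, so that as $t\to\infty$ exactly the edges emanating from $\by^{*}$ survive and all others vanish. Since $\by^{*}$ has an outgoing edge (weak reversibility) but retains no incoming edge in the limiting support, that support carries no oriented cycle and is not weakly reversible; the resulting limit point lies in $\overline{\mK(G)}$ but in no $\hat{\mK}(G_i)$, which gives the strict containment.
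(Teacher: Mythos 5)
Your proof is correct. The inclusion half is essentially the paper's argument in flux language: the paper also fixes the complex-balanced steady state $\bx_1$ of the subnetwork system, uses Lemma~\ref{lem:fiber} to produce a vector $\bk_{G}^{*}(\bx_1,\bx^*)\in\mK(G)$ sharing that steady state, and exploits linearity of the complex-balance condition at a fixed state to conclude $\varepsilon\,\bk_{G}^{*}(\bx_1,\bx^*)+\hat{\bk}_1\in\mK(G)$, then lets $\varepsilon\to 0$; your step of adding $\epsilon\,\boldsymbol{b}$ to the conserved flux $\boldsymbol{a}$ and dividing by $(\bx^*)^{\by}$ is the same perturbation. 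Where you genuinely diverge is the strictness. The paper simply notes that $\mK(G)$ is closed under positive scaling, so $\mathbf{0}\in\overline{\mK(G)}$, while $\mathbf{0}$ lies in no $\hat{\mK}(G_i)$ because each nonempty $\mK(G_i)$ consists of strictly positive vectors --- a one-line witness. You instead produce a \emph{nonzero} boundary point with non--weakly-reversible support by a toric degeneration: exposing an extreme point $\by^*$ of $\mathrm{conv}(V)$ with a linear functional, pushing the steady state to infinity along that direction, and rescaling by the cone property so that only the edges out of $\by^*$ survive. This is correct (the supporting functional exists, Lemma~\ref{lem:fiber}'s computation indeed needs no compatibility-class restriction, and the limiting support carries no oriented cycle, so it cannot equal any $E_i$ with $\hat{\mK}(G_i)\neq\emptyset$), and it buys a stronger conclusion --- a witness that survives even if one were to admit degenerate subgraphs for which $\mathbf{0}$ might count as an extension --- but it is considerably more machinery than the statement requires.
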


\begin{proof}
Since the E-graph $G=(V, E)$ is weakly reversible, Lemma \ref{lem:graph_KJ} shows $\mK (G) \neq \emptyset$ and there exists a reaction rate vector $\bk^*\in \mK(G)$. Further, we assume the mass-action system $(G, \bk^*)$ has a complex-balanced steady state $\bx^*$.

Suppose $G_1 = (V_1, E_1)$ is a subgraph of $G$.
If $G_1$ is not weakly reversible, then $\mK (G_1) = \emptyset$ and \eqref{subtoric} holds.
Otherwise, if $G_1$ is weakly reversible, then $\mK (G_1) \neq \emptyset$. 
For any reaction vector $\bk_1 = (k_{1, \by \to \by'})_{\by \to \by' \in E_1} \in
\mK (G_1)$, we assume the mass-action system $(G_1, \bk_1)$ has a complex-balanced steady state $\bx_1$.
Following \eqref{subtoric_hat}, we obtain the reaction rate vector $\hat{\bk}_1 \in \hat{\mK} (G_1) \subset \RR_{\geq 0}^E$ as follows:
\begin{equation} \label{eq:subtoric_k1_extension}
\hat{\bk}_1 = (\hat{k}_{1, \by \to \by'})_{\by \to \by' \in E}
\ \text{ with } \
\hat{k}_{1, \by \to \by'} :=
\begin{cases}
k_{1, \by \to \by'}, & \text{if } \by \to \by' \in E_1, \\
0, & \text{if } \by \to \by' \notin E_1.
\end{cases}
\end{equation} 
On the other hand, from Lemma \ref{lem:fiber} we get
\begin{equation} \label{eq:subtoric_k*}
\bk_{G}^{*} (\bx_1, \bx^*)
= \left(  \frac{\bk^*_{\by\rightarrow \by'}{(\bx^*)}^{\by}}{(\bx_1)^{\by}} \right)_{\by\rightarrow \by'\in E} \in \mK (G),
\end{equation}
and $\bx_1$ is a complex-balanced steady state for the mass-action system $(G, \bk_{G}^{*} (\bx_1, \bx^*))$.

Now we claim that for any $\varepsilon > 0$, 
$\varepsilon \bk_{G}^{*} (\bx_1, \bx^*) +  \hat{\bk}_1  \in \mK (G)$.
From $\bk_1 \in \mK (G_1)$, we have for every vertex $\by_0 \in V_1 \subset V$
\begin{equation} \notag
\sum_{\by_0 \to \by' \in E_1} k_{1, \by_0 \to \by'} (\bx_1)^{\by_0}
= \sum_{\by \to \by_0 \in E_1} k_{1, \by \to \by_0}
(\bx_1)^{\by}.
\end{equation}
Following \eqref{eq:subtoric_k1_extension}, we derive for every vertex $\by_0 \in V$
\begin{equation} \label{eq:subtoric_1}
\sum_{\by_0 \to \by' \in E} \hat{k}_{1, \by_0 \to \by'} (\bx_1)^{\by_0}
= \sum_{\by \to \by_0 \in E} \hat{k}_{1, \by \to \by_0}
(\bx_1)^{\by}.
\end{equation}
Note that $\bx_1$ is a complex-balanced steady state for the mass-action system $(G, \bk_{G}^{*} (\bx_1, \bx^*))$.
% Meanwhile, using $\bk_{G}^{*} (\bx_1, \bx^*) \in \mK (G)$ in \eqref{eq:subtoric_k*}, 
Then for every vertex $\by_0 \in V$
\begin{equation} \notag
\sum_{\by_0 \to \by' \in E} k_{\by_0 \to \by'}^{*} (\bx_1, \bx^*) (\bx_1)^{\by_0} 
= \sum_{\by \to \by_0 \in E} k^*_{\by \to \by_0} (\bx_1, \bx^*)
(\bx_1)^{\by},
\end{equation}
and thus for any $\varepsilon > 0$, 
\begin{equation} \label{eq:subtoric_2}
\sum_{\by_0 \to \by' \in E} \varepsilon k^{*}_{\by_0 \to \by'} (\bx_1, \bx^*) (\bx_1)^{\by_0} 
= \sum_{\by \to \by_0 \in E} \varepsilon k^*_{\by \to \by_0} (\bx_1, \bx^*)
(\bx_1)^{\by}.
\end{equation}
Together with \eqref{eq:subtoric_1} and \eqref{eq:subtoric_2}, we prove the claim.

\smallskip

By passing $\varepsilon \to 0$, we get $\hat{\bk}_1 \in \overline{\mK(G)}$ and $\hat{\mK} (G_1) \subseteq \overline{\mK(G)}$.
It is clear that we can apply this method to all other subgraphs of $G$. Therefore, we conclude
\begin{equation} \notag
\bigcup_{G_i \subseteq G} \hat{\mK} (G_i) \subseteq \overline{\mK(G)}.
\end{equation}

Finally, we show $\bigcup\limits_{G_i \subseteq G} \hat{\mK} (G_i) \neq \overline{\mK(G)} $.
For any vector $\bk \in \mK (G)$ and number $\varepsilon > 0$, we can compute $\varepsilon \bk  \in \mK (G)$.
By passing $\varepsilon \to 0$, we get $\mathbf{0} \in \overline{\mK(G)}$.
However, consider any subgraph $G_i$ of $G$, $\mK (G_i)$ is either an empty set or only contains positive vectors, that is, $\mathbf{0} \notin \mK (G_i)$ or $\hat{\mK} (G_i)$.
\end{proof}

\subsection{Homeomorphisms on the Toric Locus}\label{sec:homeomorphism}

It is difficult to analyze the toric locus due to its nonlinearity. 
% The main goal of this section is to establish the \emph{product structure} of the toric locus and its subsets via homeomorphisms. 
Here we introduce the linear flux systems and then establish the \emph{product structure} of the toric locus and its subsets via homeomorphisms involving flux systems.

\begin{definition}

Let $G=(V, E)$ be an E-graph. 
\begin{enumerate}[(a)]
% \item For each $\by \to \by'\in E$, there exists a flux $J_{\by \to \by'}(\bx) = k_{\by \to \by'}{\bx}^{\by}$. The set of all such flux vectors will be denoted by $\bJ =  \bJ(x) = (J_{\by \to \by'})_{\by \to \by' \in E} \in \RR_{>0}^E$. The pair $(G, \bJ)$ is said to be a \emph{flux system}
\item For each $\by\rightarrow \by' \in E$, let $J_{\by \rightarrow \by'} >0$ be the associated flux and let $\bJ = (J_{\by \rightarrow \by'})_{\by\rightarrow \by' \in E} \in \mathbb{R}_{>0}^{E}$ be the flux vector. The pair $(G, \bJ)$ is said to be a \emph{flux system}.

\item  A flux vector $\bJ \in \RR_{>0}^E$ is said to be a \emph{complex-balanced flux vector} if for every vertex $\by_0 \in V$,
\begin{equation} \notag
\sum_{\by \to \by_0 \in E} J_{\by \to \by_0} 
= \sum_{\by_0 \to \by' \in E} J_{\by_0 \to \by'}.
\end{equation} 
Then $(G, \bJ)$ is called a \emph{complex-balanced flux system}. We denote the set of all complex-balanced flux vectors on $G$ by
\begin{equation} \notag
\mJ(G):=
\{\bJ \in \RR_{>0}^{E} \mid \bJ  \text{ is a complex-balanced flux vector on G}\}.
\end{equation}
\end{enumerate}
\end{definition}

\begin{definition}

Let $(G,\bJ)$ and $(\tilde{G}, \tilde{\bJ})$ be two flux systems.
\begin{enumerate}[(a)]
\item $(G,\bJ)$ and $(\tilde{G}, \tilde{\bJ})$ are said to be \emph{flux equivalent} (denoted by $(G, \bJ) \sim (\tilde{G}, \tilde{\bJ})$), if for every vertex $\by_0 \in V \cup \tilde{V}$, 
\begin{equation} \notag
\sum_{\by_0 \to \by \in E} J_{\by_0 \to \by} (\by - \by_0) 
= \sum_{\by_0 \to \by' \in \tilde{E}} \tilde{J}_{\by_0 \to \by'} (\by' - \by'_0).
\end{equation}

\item $(\tilde{G}, \tilde{\bJ})$ is said to be \emph{$\RR$-realizable} on $G$ if there exists some $\bJ \in \mathbb{R}^{E}$, such that for every vertex $\by_0 \in V \cup \tilde{V}$,
\begin{equation} \notag
\sum_{\by_0 \to \by \in E} J_{\by \to \by_0} 
(\by' - \by) 
= \sum_{\by_0 \to \by' \in \tilde{E}} \tilde{J}_{\by_0 \to \by'} (\by' - \by'_0).
\end{equation}
Further, if $\bJ \in \mathbb{R}^{E}_{>0}$, $(\tilde{G}, \tilde{\bJ})$ is said to be \emph{realizable} on $G$.

\item We denote the set $\mJe (\tilde{G}, G)$ as
\begin{equation} \notag
\mJe (\tilde{G}, G) := \{  \tilde{\bJ} \in \mJ (\tilde{G}) \ \big| \ \text{the flux system } (\tilde{G},  \tilde{\bJ}) \ \text{is $\RR$-realizable on } G \}.
\end{equation}
Further, we denote the set $\mJ (\tilde{G}, G)$ as
\begin{equation} \notag
\mJ (\tilde{G}, G) := \{  \tilde{\bJ} \in \mJ (\tilde{G}) \ \big| \ \text{the flux system } (\tilde{G},  \tilde{\bJ}) \ \text{is realizable on } G \}.
\end{equation}

\end{enumerate}

\end{definition}

\begin{remark}
Suppose two flux systems $(G,\bJ)$ and $(\tilde{G}, \tilde{\bJ})$ are flux equivalent, then $(G, \bJ)$ is realizable on $\tilde{G}$ and $(\tilde{G}, \tilde{\bJ})$ is realizable on $G$.
\end{remark}

Here we list some of the most important results of flux systems.

\begin{lemma}[\cite{disg_1}]
\label{lem:convex_cone}

Let $\tilde{G} = (\tilde{V}, \tilde{E})$ be a weakly reversible E-graph and let $G = (V, E)$ be an E-graph. Then $\mJ (\tilde{G}, G)$ is a convex cone.
\end{lemma}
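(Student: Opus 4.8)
The plan is to verify directly the two defining properties of a convex cone: closure under multiplication by positive scalars, and closure under addition. The whole argument rests on a single structural observation, namely that both conditions cutting out $\mJ(\tilde{G}, G)$ inside $\RR_{>0}^{\tilde{E}}$ — being a complex-balanced flux vector on $\tilde{G}$, and being realizable on $G$ — are \emph{linear and homogeneous} in the flux vector. I therefore expect the content of the lemma to be almost entirely bookkeeping, with the only genuine point being the preservation of strict positivity.

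First I would treat positive scaling. Fix $\tilde{\bJ} \in \mJ(\tilde{G}, G)$ and $\lambda > 0$. The complex-balance equations $\sum_{\by \to \by_0 \in \tilde{E}} J_{\by \to \by_0} = \sum_{\by_0 \to \by' \in \tilde{E}} J_{\by_0 \to \by'}$ hold for $\tilde{\bJ}$ at each vertex $\by_0 \in \tilde{V}$; multiplying both sides by $\lambda$ shows $\lambda \tilde{\bJ} \in \mJ(\tilde{G})$, and $\lambda \tilde{\bJ} \in \RR_{>0}^{\tilde{E}}$ since $\lambda > 0$. By realizability there is $\bJ \in \RR_{>0}^E$ realizing $(\tilde{G}, \tilde{\bJ})$ on $G$; scaling the realizability identity by $\lambda$ shows that $\lambda \bJ \in \RR_{>0}^E$ realizes $(\tilde{G}, \lambda \tilde{\bJ})$ on $G$. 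Hence $\lambda \tilde{\bJ} \in \mJ(\tilde{G}, G)$.

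Next I would treat addition. Take $\tilde{\bJ}_1, \tilde{\bJ}_2 \in \mJ(\tilde{G}, G)$. Adding their complex-balance equations vertex by vertex shows that $\tilde{\bJ}_1 + \tilde{\bJ}_2$ again satisfies the complex-balance condition on $\tilde{G}$, and the sum is strictly positive, so $\tilde{\bJ}_1 + \tilde{\bJ}_2 \in \mJ(\tilde{G})$. For realizability, choose $\bJ_1, \bJ_2 \in \RR_{>0}^E$ realizing $(\tilde{G}, \tilde{\bJ}_1)$ and $(\tilde{G}, \tilde{\bJ}_2)$ on $G$, respectively; the crucial step is that these realizing flux vectors live in the \emph{open} positive orthant, so $\bJ_1 + \bJ_2 \in \RR_{>0}^E$, and summing the two realizability identities shows that $\bJ_1 + \bJ_2$ realizes $(\tilde{G}, \tilde{\bJ}_1 + \tilde{\bJ}_2)$ on $G$. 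Therefore $\tilde{\bJ}_1 + \tilde{\bJ}_2 \in \mJ(\tilde{G}, G)$, and combined with the scaling step this establishes the convex cone property.

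The main (and essentially only) obstacle is conceptual rather than computational: I must track two distinct flux vectors — one on $\tilde{G}$ and one on $G$ — and ensure both stay in their respective open positive orthants under the operations. Non-uniqueness of the realizing vector $\bJ$ causes no trouble, since for each $\tilde{\bJ}_i$ I simply fix one realization and add. A more structural alternative I could take is to write $\mJ(\tilde{G}, G) = \mJ(\tilde{G}) \cap \Phi_{\tilde{G}}^{-1}\big(\Phi_G(\RR_{>0}^E)\big)$, where $\Phi_G$ and $\Phi_{\tilde{G}}$ are the linear maps sending a flux vector to its tuple of net reaction vectors at each vertex; then $\mJ(\tilde{G})$ is a convex cone, $\Phi_G(\RR_{>0}^E)$ is the image of a convex cone under a linear map, and a preimage-intersection of convex cones under linear maps is again a convex cone. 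I expect the authors' proof to be the direct computation above.
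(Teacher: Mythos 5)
Your proof is correct. Note that the paper itself gives no proof of this lemma --- it is imported from the reference \cite{disg_1} --- so there is no in-paper argument to compare against; your direct verification (complex balance and the realizability identity are both linear and homogeneous in the flux vectors, and strict positivity of both $\tilde{\bJ}$ and the realizing $\bJ \in \RR^{E}_{>0}$ is preserved under addition and positive scaling) is the standard argument and handles the one genuinely delicate point, namely fixing a realization for each summand and adding them inside the open orthant. The empty case is vacuous, and closure under addition plus positive scaling does yield convexity, so nothing is missing.
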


\begin{proposition}[\cite{craciun2020efficient}]
\label{prop:craciun2020efficient}
Consider two mass-action systems $(G, \bk)$ and $(\tilde{G}, \tilde{\bk})$. Let $\bx \in \RR_{>0}^n$,
%Let $\bx \in \RR_{>0}^n$. 
%Define the flux vector $\bJ (\bx) = (J_{\by \to \by'})_{\by \to \by' \in E}$ on $G$, such that for every $\by \to \by' \in E$
%\begin{equation}
%J_{\by \to \by'} = k_{\by \to \by'} \bx^{\by}.
%\end{equation}
%Further, define the flux vector $\bJ' (\bx) = (J'_{\by \to \by'})_{\by \to \by' \in E'}$ on $G'$, such that for every $\by \to \by' \in E$
%\begin{equation}
%J'_{\by \to \by'} = k'_{\by \to \by'} \bx^{\by}.
%\end{equation} 
define the flux vector $\bJ(\bx) = (J_{\by \to \by'})_{\by \to \by' \in G} $ on $G$ with $J_{\by \to \by'} = \bk_{\by \to \by'} \bx^{\by} $.
Similarly, define the flux vector $\bJ'(\bx) = (J'_{\by \to \by'})_{\by \to \by' \in G'} $ on $G'$ with $J'_{\by \to \by'} = \bk'_{\by \to \by'} \bx^{\by}$.
Then the following are equivalent:
\begin{enumerate}

\item[(a)] the mass-action systems $(G, \bk)$ and $(\tilde{G}, \tilde{\bk})$ are dynamically equivalent.

\item[(b)] the flux systems $(G, \bJ(\bx))$ and $(\tilde{G}, \tilde{\bJ}(\bx))$ are flux equivalent for all $\bx \in \RR_{>0}^n$.

\item[(c)] the flux systems $(G, \bJ(\bx))$ and $(\tilde{G}, \tilde{\bJ}(\bx))$ are flux equivalent for some $\bx \in \RR_{>0}^n$
\end{enumerate}
\end{proposition}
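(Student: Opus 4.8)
The plan is to prove the cyclic chain of implications (a) $\Rightarrow$ (b) $\Rightarrow$ (c) $\Rightarrow$ (a). The entire argument rests on one elementary observation: for a fixed source vertex $\by_0$, every reaction $\by_0 \to \by \in E$ and every reaction $\by_0 \to \by' \in \tilde{E}$ carries the \emph{same} monomial $\bx^{\by_0}$, precisely because the source vertex is common to all terms in each sum. Consequently, substituting $J_{\by_0 \to \by}(\bx) = k_{\by_0 \to \by}\bx^{\by_0}$ and $\tilde{J}_{\by_0 \to \by'}(\bx) = \tilde{k}_{\by_0 \to \by'}\bx^{\by_0}$ into the flux-equivalence condition at $\by_0$ lets the scalar $\bx^{\by_0}$ factor out of each side, so that the flux-equivalence identity at $\by_0$ is nothing more than the dynamical-equivalence identity at $\by_0$ scaled by the positive number $\bx^{\by_0}$.

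For (a) $\Rightarrow$ (b), I would start from the dynamical-equivalence condition of Definition~\ref{def:de}: for every $\by_0 \in V \cup \tilde{V}$,
\[
\sum_{\by_0 \to \by \in E} k_{\by_0 \to \by}(\by - \by_0) = \sum_{\by_0 \to \by' \in \tilde{E}} \tilde{k}_{\by_0 \to \by'}(\by' - \by_0),
\]
and multiply both sides by $\bx^{\by_0}$ for an arbitrary $\bx \in \RR_{>0}^n$. Distributing the scalar inside each sum and recognizing $k_{\by_0 \to \by}\bx^{\by_0} = J_{\by_0 \to \by}(\bx)$ and $\tilde{k}_{\by_0 \to \by'}\bx^{\by_0} = \tilde{J}_{\by_0 \to \by'}(\bx)$ yields exactly the flux-equivalence condition for $(G, \bJ(\bx))$ and $(\tilde{G}, \tilde{\bJ}(\bx))$ at $\by_0$. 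Since $\by_0$ and $\bx$ are arbitrary, this gives flux equivalence for all $\bx \in \RR_{>0}^n$, which is (b).

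The implication (b) $\Rightarrow$ (c) is immediate, since $\RR_{>0}^n$ is nonempty. For (c) $\Rightarrow$ (a) I would run the same computation in reverse: fix the particular $\bx^* \in \RR_{>0}^n$ for which flux equivalence holds, write the flux-equivalence identity at each $\by_0$, factor the common monomial $(\bx^*)^{\by_0}$ out of both sides, and divide through by it. This division is legitimate precisely because $\bx^* \in \RR_{>0}^n$ forces $(\bx^*)^{\by_0} > 0$; what remains is the dynamical-equivalence identity at $\by_0$, establishing (a).

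I do not anticipate a genuine obstacle: the whole content is the factorization of $\bx^{\by_0}$ together with the positivity that licenses cancellation. The only point demanding care is the bookkeeping around the empty-sum convention in Definition~\ref{def:de}. When $\by_0 \notin V$ or $\by_0 \notin \tilde{V}$ the corresponding side is an empty sum equal to $\mathbf{0}$, and the same is true on the flux side since there are then no outgoing reactions from $\by_0$; one should check that these degenerate cases match up vertex-by-vertex on both the mass-action and flux formulations, so that the per-vertex equivalence of the two identities holds uniformly over all of $V \cup \tilde{V}$.
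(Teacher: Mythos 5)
The paper does not prove this proposition; it is imported verbatim from \cite{craciun2020efficient}, so there is no internal proof to compare against. Your argument is correct as a self-contained verification relative to the definitions used in this paper: since Definition~\ref{def:de} states dynamical equivalence \emph{vertex-by-vertex}, and flux equivalence is likewise a vertex-by-vertex condition, the two identities at a fixed source $\by_0$ differ exactly by the positive scalar $\bx^{\by_0}$, and your multiply/divide argument (plus the empty-sum check) settles all three implications. One caveat worth being aware of: the reason this comes out as essentially definitional is that the paper has already packaged the hard step into Definition~\ref{def:de}. In \cite{craciun2020efficient} (and in most of the literature), dynamical equivalence means equality of the ODE right-hand sides $\sum_{\by \to \by' \in E} k_{\by \to \by'}\bx^{\by}(\by'-\by) = \sum_{\by \to \by' \in \tilde{E}} \tilde{k}_{\by \to \by'}\bx^{\by}(\by'-\by)$ as functions of $\bx$ on $\RR^n_{>0}$; the genuine content of the proposition is that this single functional identity can be split into one identity per source vertex, which requires the linear independence of the distinct monomials $\bx^{\by}$ over $\RR^n_{>0}$. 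Your proof never needs that step because the paper's definition already assumes the split form, but if you were asked to prove the proposition against the standard definition, the linear-independence argument is the missing ingredient you would have to supply.
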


The following theorem shows the product structure of the toric locus.

\begin{theorem}[\cite{craciun2020structure}]
\label{thm:homeo}
Consider a weakly reversible E-graph $G = (V, E)$ and its stoichiometric subspace $\mSG$.
For any $\bx_0\in\mathbb{R}^n_{>0}$, there exists a homeomorphism
\begin{equation}
\varphi : \mJ(G)\times[(\bx_0 + \mSG)\cap\mathbb{R}^n_{>0}]  \mapsto \mK(G),
\end{equation}
such that for $\bx \in (\bx_0 + \mSG)\cap\mathbb{R}^n_{>0}$ and 
$\bJ = (J_{\byi \to \byj})_{\byi \to \byj \in E} \in \mJ (G)$,
\begin{equation} 
\varphi (\bJ, \bx) = \Big( \frac{J_{\byi \to \byj}}{\bx^{\byi}} \Big)_{\byi \to \byj \in E}.
\end{equation}
Then $\mK(G)$ is 
%a toric variety which is 
homeomorphic to the product space $\mJ(G)\times[(\bx_0 + \mSG)\cap\mathbb{R}^n_{>0}]$.
\end{theorem}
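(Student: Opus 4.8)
The plan is to show that $\varphi$ is a well-defined continuous bijection admitting a continuous inverse; the crux of the argument is the continuity of the inverse, which reduces to the continuous dependence of the complex-balanced steady state on the rate vector.

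\textbf{Well-definedness and continuity of $\varphi$.} First I would verify that $\varphi(\bJ, \bx) \in \mK(G)$ for every $(\bJ, \bx)$ in the domain. Writing $k_{\byi \to \byj} = J_{\byi \to \byj}/\bx^{\byi}$, a direct computation in the same spirit as the proof of Lemma \ref{lem:fiber} shows that the inflow--outflow balance $\sum_{\by \to \by_0} J_{\by \to \by_0} = \sum_{\by_0 \to \by'} J_{\by_0 \to \by'}$ at each vertex $\by_0 \in V$ translates exactly into the complex-balancing identity for $(G, \bk)$ evaluated at $\bx$; hence $\bx$ is a complex-balanced steady state of $(G,\bk)$ and $\bk = \varphi(\bJ,\bx) \in \mK(G)$. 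Continuity of $\varphi$ is immediate, since each coordinate $J_{\byi \to \byj}/\bx^{\byi}$ is continuous on $\RR_{>0}^E \times \RR_{>0}^n$, the denominators never vanishing there.

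\textbf{Bijectivity.} Here I would invoke the classical fact (Horn--Jackson/Birch) that a complex-balanced system possesses exactly one complex-balanced steady state in each positive stoichiometric compatibility class. For injectivity, if $\varphi(\bJ_1,\bx_1) = \varphi(\bJ_2,\bx_2) = \bk$, then both $\bx_1$ and $\bx_2$ are complex-balanced steady states of $(G,\bk)$ lying in the same class $(\bx_0 + \mSG)\cap\RR_{>0}^n$, forcing $\bx_1 = \bx_2$, after which $J_{i,\byi \to \byj} = k_{\byi \to \byj}\bx_i^{\byi}$ gives $\bJ_1 = \bJ_2$. For surjectivity, given $\bk \in \mK(G)$, let $\bx$ be the unique complex-balanced steady state of $(G,\bk)$ in $(\bx_0 + \mSG)\cap\RR_{>0}^n$ and set $J_{\byi \to \byj} := k_{\byi \to \byj}\bx^{\byi}$; then $\bJ \in \mJ(G)$ because $\bx$ is complex-balanced, and $\varphi(\bJ,\bx) = \bk$. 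This simultaneously exhibits the inverse explicitly as $\bk \mapsto (\bJ(\bk),\bx(\bk))$, where $\bx(\bk)$ is the unique complex-balanced steady state in the fixed class.

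\textbf{Continuity of the inverse (the main obstacle).} It remains to prove that $\bk \mapsto \bx(\bk)$ is continuous, since then $\bk \mapsto \bJ(\bk) = (k_{\byi \to \byj}\bx(\bk)^{\byi})$ is automatically continuous and the proof concludes. I would argue sequentially: take $\bk_m \to \bk$ in $\mK(G)$ and set $\bx_m = \bx(\bk_m)$. The essential difficulty is to prevent the $\bx_m$ from escaping to the boundary $\partial\RR_{\ge 0}^n$ or to infinity, as the compatibility class need not be compact. To control this I would use the strictly convex Lyapunov function of a complex-balanced system: $\bx(\bk)$ is its unique minimizer over the class, and coercivity together with level-set bounds that vary continuously in $\bk$ over the compact set $\{\bk\}\cup\{\bk_m\}$ confine all $\bx_m$ to a compact subset of $(\bx_0 + \mSG)\cap\RR_{>0}^n$. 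Any convergent subsequence $\bx_{m_j} \to \bar\bx$ then satisfies the complex-balancing equations for $(G,\bk)$ by continuity, so $\bar\bx = \bx(\bk)$ by uniqueness; as every subsequence admits a further subsequence with this same limit, $\bx_m \to \bx(\bk)$. Alternatively, local continuity follows from the implicit function theorem, since the Jacobian of the complex-balancing map restricted to the class coincides with the Hessian of the strictly convex Lyapunov function and is therefore nonsingular. Either route establishes continuity of the inverse, so $\varphi$ is a homeomorphism.
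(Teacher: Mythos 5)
The paper does not actually prove this statement: Theorem~\ref{thm:homeo} is imported from~\cite{craciun2020structure} and used as a black box, so there is no internal proof to compare against. That said, your outline is essentially the standard argument behind the cited result, and its skeleton is sound: well-definedness and continuity of $\varphi$ are the direct computations you describe; bijectivity follows from the Horn--Jackson fact that a complex-balanced system has exactly one (complex-balanced) positive steady state in each stoichiometric compatibility class; and the whole weight of the theorem rests on continuity of $\bk \mapsto \bx(\bk)$. The one soft spot is your first route to that continuity. The classical Lyapunov function $\sum_i x_i(\ln x_i - \ln x_i^* - 1)$ is anchored at the steady state $\bx^*$ of the system under consideration, so ``level-set bounds that vary continuously in $\bk$'' is circular as stated: to locate the minimizer of the Lyapunov function for $(G,\bk_m)$ you already need to know $\bx(\bk_m)$, which is exactly the quantity you are trying to confine to a compact set. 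This is repaired either by your second route (the implicit function theorem applied to the steady-state map restricted to $\bx_0 + \mSG$, whose Jacobian on $\mSG$ is nonsingular at a complex-balanced equilibrium --- this is in fact how~\cite{craciun2020structure} obtains smooth, not just continuous, dependence), or by a Birch-theorem argument: once some complex-balanced steady state $\bq(\bk)$ is produced locally continuously in $\bk$, the unique point of $(\bx_0+\mSG)\cap\RR^n_{>0}$ with $\ln \bx - \ln \bq(\bk) \in \mSG^{\perp}$ depends continuously on $\bq(\bk)$. With that step made precise, your proof is complete and matches the approach of the reference.
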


Now we are ready to present the main result of this section.

\begin{theorem}
\label{thm:sub_homeo}
Consider a weakly reversible E-graph $\tilde{G} = (\tilde{V}, \tilde{E})$ with its stoichiometric subspace $\mS_{\tilde{G}}$. For any E-graph $G = (V, E)$ and $\bx_0\in\mathbb{R}^n_{>0}$, then
\begin{enumerate}[(a)]
\item $\mKe (\tilde{G}, G)$ is homeomorphic to the product space $\mJe (\tilde{G}, G) \times [(\bx_0 + \mS_{\tilde{G}})\cap\mathbb{R}^n_{>0}]$.

\item $\mK(\tilde{G}, G)$ is homeomorphic to the product space $\mJ(\tilde{G}, G)\times[(\bx_0 + \mS_{\tilde{G}})\cap\mathbb{R}^n_{>0}]$.
\end{enumerate}
% $\mK(\tilde{G}, G)$ is homeomorphic to the product space $\mJ(\tilde{G}, G)\times[(\bx_0 + \mS_{\tilde{G}})\cap\mathbb{R}^n_{>0}]$.

\end{theorem}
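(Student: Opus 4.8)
The plan is to obtain both homeomorphisms by \emph{restricting} the homeomorphism supplied by Theorem~\ref{thm:homeo}. Since $\tilde{G}$ is weakly reversible, Theorem~\ref{thm:homeo} applied to $\tilde{G}$ yields a homeomorphism
\begin{equation} \notag
\varphi : \mJ(\tilde{G})\times X \to \mK(\tilde{G}), \qquad \varphi(\tilde{\bJ}, \bx) = \Big( \frac{\tilde{J}_{\by \to \by'}}{\bx^{\by}} \Big)_{\by \to \by' \in \tilde{E}},
\end{equation}
where $X := (\bx_0 + \mS_{\tilde{G}})\cap\mathbb{R}^n_{>0}$. Because $\mKe(\tilde{G}, G)\subseteq\mK(\tilde{G})$ and $\mJe(\tilde{G}, G)\subseteq\mJ(\tilde{G})$, the restriction of a homeomorphism to subspaces (equipped with the subspace topology) is automatically a homeomorphism onto its image. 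Hence the entire proof of (a) reduces to the single set equality $\varphi\big(\mJe(\tilde{G}, G)\times X\big) = \mKe(\tilde{G}, G)$, and (b) reduces to the analogous equality with the positive (realizable) subsets.

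The heart of the argument is the following claim, which I would prove by transporting the realizing object through the common point $\bx$: for $\tilde{\bJ}\in\mJ(\tilde{G})$, $\bx\in X$, and $\tilde{\bk} := \varphi(\tilde{\bJ}, \bx)$, the flux system $(\tilde{G}, \tilde{\bJ})$ is $\RR$-realizable on $G$ if and only if the mass-action system $(\tilde{G}, \tilde{\bk})$ is $\RR$-realizable on $G$. For the forward direction, if $(\tilde{G}, \tilde{\bJ})$ is $\RR$-realizable on $G$ then there is $\bJ\in\mathbb{R}^{E}$ with $(\tilde{G}, \tilde{\bJ})$ flux equivalent to $(G, \bJ)$; setting $k_{\by \to \by'} := J_{\by \to \by'}/\bx^{\by}$ for $\by \to \by'\in E$ produces a real rate vector $\bk$ on $G$ satisfying $J_{\by \to \by'} = k_{\by \to \by'}\bx^{\by}$, while by construction $\tilde{J}_{\by \to \by'} = \tilde{k}_{\by \to \by'}\bx^{\by}$. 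Thus $\tilde{\bJ}$ and $\bJ$ are exactly the fluxes induced at $\bx$ by the rates $\tilde{\bk}$ and $\bk$, so Proposition~\ref{prop:craciun2020efficient} (in its ``for some $\bx$'' form, applied to real rate vectors as permitted by the convention after Definition~\ref{def:de}) upgrades flux equivalence at $\bx$ to dynamic equivalence of $(\tilde{G}, \tilde{\bk})$ and $(G, \bk)$. By Definition~\ref{def:mas_realizable} this means $(\tilde{G}, \tilde{\bk})$ is $\RR$-realizable on $G$, i.e. $\tilde{\bk}\in\mKe(\tilde{G}, G)$. The converse is symmetric: from a real realizing rate vector $\bk$ on $G$ one sets $J_{\by \to \by'} := k_{\by \to \by'}\bx^{\by}$ and runs Proposition~\ref{prop:craciun2020efficient} in the opposite direction. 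Since $\tilde{\bk}\in\mK(\tilde{G})$ and $\tilde{\bJ}\in\mJ(\tilde{G})$ hold automatically from the domain and codomain of $\varphi$, this claim is precisely the desired set equality, establishing part (a).

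For part (b) the argument is identical, the only change being that the realizing vectors must be strictly positive. The key observation is that the correspondence $J_{\by \to \by'}\leftrightarrow k_{\by \to \by'} = J_{\by \to \by'}/\bx^{\by}$ preserves positivity, because $\bx^{\by}>0$ for every $\bx\in\mathbb{R}^n_{>0}$. Consequently $\bJ\in\mathbb{R}^{E}_{>0}$ if and only if the associated $\bk\in\mathbb{R}^{E}_{>0}$, so $(\tilde{G}, \tilde{\bJ})$ is \emph{realizable} on $G$ exactly when $(\tilde{G}, \tilde{\bk})$ is, and the same restriction of $\varphi$ gives a homeomorphism of $\mJ(\tilde{G}, G)\times X$ onto $\mK(\tilde{G}, G)$.

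I expect the main obstacle to be the careful bookkeeping inside the claim: one must check that Proposition~\ref{prop:craciun2020efficient} genuinely applies when the rate vectors on $G$ are allowed to be real rather than positive, and that the realizability notion of Definition~\ref{def:mas_realizable} for mass-action systems lines up with flux equivalence through the single shared point $\bx$. Once that equivalence is pinned down, the remainder is a formal consequence of restricting the already-established homeomorphism $\varphi$ to subspaces with the subspace topology.
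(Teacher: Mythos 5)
Your proposal is correct and takes essentially the same approach as the paper: restrict the homeomorphism $\varphi$ from Theorem~\ref{thm:homeo} to the subspaces, reduce everything to the set identity $\varphi\big(\mJ(\tilde{G},G)\times[(\bx_0+\mS_{\tilde{G}})\cap\mathbb{R}^n_{>0}]\big)=\mK(\tilde{G},G)$ (and its $\RR$-version), and verify it through the correspondence $J_{\by\to\by'}=k_{\by\to\by'}\bx^{\by}$ together with Proposition~\ref{prop:craciun2020efficient}. The only cosmetic differences are that you package the two inclusions as a single ``if and only if'' claim and obtain the positive case by noting that the correspondence preserves positivity, whereas the paper proves the positive case directly (using the unique complex-balanced steady state in the compatibility class for the reverse inclusion) and declares the $\RR$-case analogous.
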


\begin{proof}

We only prove part $(b)$ as part $(a)$ can be verified similarly. From Theorem \ref{thm:homeo}, there exists the homeomorphism map $\varphi$, such that 
\begin{equation} \notag
\varphi : \mJ (\tilde{G}) \times [(\bx_0 + \mS_{\tilde{G}} )\cap\mathbb{R}^n_{>0}]  \mapsto \mK (\tilde{G}).
\end{equation}

%Assume that we show~\eqref{varphi_sub}, then

On the subdomain $\mJ(\tilde{G}, G) \times [(\bx_0+\mS_{\tilde{G}})\cap\mathbb{R}^n_{>0}]$, the injectivity and continuity of $\varphi$ and its inverse are guaranteed from Theorem~\ref{thm:homeo}. Since $\mK(\tilde{G}, G) \subset \mK (\tilde{G})$ and $\mJ(\tilde{G}, G) \subset \mJ (\tilde{G})$, to prove the homeomorphism in part $(b)$, it therefore suffices for us to show that
\begin{equation} \label{varphi_sub}
\varphi \big( \mJ(\tilde{G}, G) \times [(\bx_0+\mS_{\tilde{G}})\cap\mathbb{R}^n_{>0}] \big) = \mK(\tilde{G}, G).
\end{equation}

First, we show that $\varphi \big( \mJ(\tilde{G}, G) \times [(\bx_0+\mS_{\tilde{G}})\cap\mathbb{R}^n_{>0}] \big) \subseteq \mK(\tilde{G}, G)$. Assume that the pair 
$(\tilde{\bJ}, \tilde{\bx}) \in \mJ(\tilde{G}, G) \times [(\bx_0+\mS_{\tilde{G}})\cap\mathbb{R}^n_{>0}]$, 
% From Definition \ref{def:flux_realizable}, 
there exists a flux vector $\bJ = (J_{\byi \to \byj})_{\byi \to \byj \in E} \in \mathbb{R}^{E}_{>0}$, such that for every vertex $\by_0 \in V \cup \tilde{V}$,
\begin{equation} \label{eq:sub_homeo_1}
\sum_{\by_0 \to \by \in E} J_{\by_0 \to \by} 
(\by - \by_0) 
= \sum_{\by_0 \to \by' \in \tilde{E}} \tilde{J}_{\by_0 \to \by'} (\by' - \by_0).
\end{equation}
Using Theorem \ref{thm:homeo}, we input $\varphi (\tilde{\bJ}, \tilde{\bx})$
% get $\varphi (\tilde{\bJ}, \tilde{\bx}) = \Big( \frac{\tilde{J}_{\by_0 \to \by'}}{(\tilde{\bx})^{\by_0}} \Big)_{\by_0 \to \by' \in \tilde{E}}$ and input it 
into \eqref{eq:sub_homeo_1}. Then for every vertex $\by_0 \in V \cup \tilde{V}$,
\begin{equation} \notag
\sum_{\by_0 \to \by \in E} J_{\by_0 \to \by} 
(\by - \by_0) 
= \sum_{\by_0 \to \by' \in \tilde{E}} \varphi_{\by_0 \to \by'}  (\tilde{\bx})^{\by_0} (\by' - \by_0).
\end{equation}
% where $\varphi_{\by_0 \to \bypj} = \varphi_{\by_0 \to \bypj} (\bJ', \bx')$.
We construct the following reaction rate vector in $G$: 
\begin{equation} \notag
\bk = (k_{\byi \to \byj})_{\byi \to \byj \in E} \ \text{ with } \
k_{\byi \to \byj} = \frac{J_{\byi \to \byj}}{(\tilde{\bx})^{\byi}},
\end{equation} 
and for every vertex $\by_0 \in V \cup \tilde{V}$
\begin{equation} \label{eq:sub_homeo_3}
\sum_{\by_0 \to \by \in E} k_{\by_0 \to \by} (\tilde{\bx})^{\by_0}
(\by - \by_0) 
= \sum_{\by_0 \to \by' \in \tilde{E}} \varphi_{\by_0 \to \by'}  (\tilde{\bx})^{\by_0} (\by' - \by_0).
\end{equation}
From Proposition \ref{prop:craciun2020efficient}, we derive 
\[
(\tilde{G}, \varphi (\tilde{\bJ}, \tilde{\bx})) \sim (G, \bk)
\ \text{ and } \
\varphi (\tilde{\bJ}, \tilde{\bx}) \in \mK(\tilde{G}, G).
\]
Thus, we prove that $\varphi \big( \mJ(\tilde{G}, G) \times [(\bx_0+\mS_{\tilde{G}})\cap\mathbb{R}^n_{>0}] \big) \subseteq \mK(\tilde{G}, G)$.

\smallskip

Second, we show $\mK(\tilde{G}, G)\subseteq \varphi \big( \mJ(\tilde{G}, G) \times [(\bx_0+\mS_{\tilde{G}})\cap\mathbb{R}^n_{>0}] \big)$. Assume $\tilde{\bk} \in \mK(\tilde{G}, G)$,
% .  From Definition \ref{def:de_realizable}
there exists a reaction rate vector $\bk = (k_{\byi \to \byj})_{\byi \to \byj \in E} \in \mathbb{R}^{E}_{>0}$, such that for every vertex $\by_0 \in V \cup \tilde{V}$
\begin{equation} \label{eq:sub_homeo_4}
\sum_{\by_0 \to \by \in E} k_{\by_0 \to \by} 
(\by - \by_0) 
= \sum_{\by_0 \to \by' \in \tilde{E}} \tilde{k}_{\by_0 \to \by'} (\by' - \by_0).
\end{equation}
Note that $(\tilde{G}, \tilde{\bk})$ has a unique complex-balanced steady state in $(\bx_0+\mS_{\tilde{G}})\cap\mathbb{R}^n_{>0}$, denoted by $\tilde{\bx}$.
Then we build two flux vectors in $\tilde{G}$ and $G$ as follows:
\begin{equation} \notag
\begin{split}
& \tilde{\bJ} = (\tilde{J}_{\byi \to \byj})_{\byi \to \byj \in \tilde{E}}
\ \text{ with } \
\tilde{J}_{\byi \to \byj} = \tilde{k}_{\byi \to \byj}(\tilde{\bx})^{\byi},
\\& \bJ = (J_{\byi \to \byj})_{\byi \to \byj \in E}
\ \text{ with } \
J_{\byi \to \byj} = k_{\byi \to \byj}(\tilde{\bx})^{\byi}.
\end{split}
\end{equation}
% From \eqref{eq:sub_homeo_1}, for every vertex $\by_0 \in V \cup \tilde{V}$
% \begin{equation} \label{eq:sub_homeo_5}
% \sum_{\by_0 \to \by \in E} k_{\by_0 \to \by} (\tilde{\bx})^{\by_0}
% (\by - \by_0) 
% = \sum_{\by_0 \to \by' \in \tilde{E}} \tilde{J}_{\by_0 \to \by'} (\by' - \by_0).
% \end{equation}
% By setting $\bJ = (J_{\byi \to \byj})_{\byi \to \byj \in E}$ with $J_{\byi \to \byj} = k_{\byi \to \byj}(\tilde{\bx})^{\byi}$, we get
Thus, we get
\begin{equation} \label{eq:sub_homeo_6}
\sum_{\by_0 \to \by \in E} J_{\by_0 \to \by}
(\by - \by_0) 
= \sum_{\by_0 \to \by' \in \tilde{E}} \tilde{J}_{\by_0 \to \by'} (\by' - \by_0).
\end{equation}
This implies that $\tilde{\bJ} \in \mJ(\tilde{G}, G)$ and $\varphi (\tilde{\bJ}, \tilde{\bx}) = \tilde{\bk}$. 
Therefore, together with two parts, 
%$\varphi$
%$\varphi: \mJ(\tilde{G}, G) \times [(\bx_0+\mS_{\tilde{G}})\cap\mathbb{R}^n_{>0}] \mapsto \mK(\tilde{G}, G)$ is surjective and 
we conclude~\eqref{varphi_sub}.
\end{proof}

From the homeomorphism in Theorem \ref{thm:sub_homeo}, we show the connectivity on $\mK(\tilde{G}, G)$.

\begin{theorem}
\label{thm:connect}
Consider a weakly reversible E-graph $\tilde{G} = (\tilde{V}, \tilde{E})$ and an E-graph $G = (V, E)$. 
\begin{enumerate}[(a)]
\item Both $\mJe (\tilde{G}, G)$ and $\mKe (\tilde{G}, G)$ are 
%either empty sets or they are 
path-connected.
Moreover, $\mJe (\tilde{G}, G)$ is non-empty if and only if $\mKe (\tilde{G}, G)$ is non-empty.

\item Both $\mJ(\tilde{G}, G)$ and $\mK(\tilde{G}, G)$ are 
%either empty sets or they are 
path-connected.
Moreover, $\mJ(\tilde{G}, G)$ is non-empty if and only if $\mK(\tilde{G}, G)$ is non-empty.
\end{enumerate}
% Then both $\mJ(\tilde{G}, G)$ and $\mK(\tilde{G}, G)$ are either empty sets or they are path-connected.
\end{theorem}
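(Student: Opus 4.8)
The plan is to deduce both parts from the product-structure homeomorphisms of Theorem~\ref{thm:sub_homeo}, using the facts that path-connectedness is preserved under homeomorphisms and finite products, and that every convex set is path-connected.

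I would begin with part~(b). By Lemma~\ref{lem:convex_cone} the set $\mJ(\tilde{G}, G)$ is a convex cone; being convex, it is path-connected, since any two of its points are joined by the straight segment between them. The factor $(\bx_0 + \mS_{\tilde{G}}) \cap \RR^n_{>0}$ is the intersection of an affine subspace with the open positive orthant, hence convex and path-connected, and it is non-empty because it contains $\bx_0$. A finite product of path-connected spaces is path-connected, so $\mJ(\tilde{G}, G) \times [(\bx_0 + \mS_{\tilde{G}}) \cap \RR^n_{>0}]$ is path-connected; since Theorem~\ref{thm:sub_homeo}(b) exhibits a homeomorphism from this product onto $\mK(\tilde{G}, G)$, the latter is path-connected as well. (The path-connectedness of $\mJ(\tilde{G}, G)$ itself is exactly the convexity observation above.) For the non-emptiness equivalence, note that the homeomorphism is in particular a bijection, so $\mK(\tilde{G}, G)$ is non-empty if and only if the product is; as the second factor is always non-empty, this occurs precisely when $\mJ(\tilde{G}, G)$ is non-empty.

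For part~(a) I would repeat this argument verbatim, now invoking Theorem~\ref{thm:sub_homeo}(a), once I have shown that $\mJe(\tilde{G}, G)$ is convex. This is the only point not directly supplied by Lemma~\ref{lem:convex_cone}, so I would check it by hand: $\mJe(\tilde{G}, G) = \mJ(\tilde{G}) \cap \{ \tilde{\bJ} : (\tilde{G}, \tilde{\bJ}) \text{ is } \RR\text{-realizable on } G \}$, where $\mJ(\tilde{G})$ is a convex cone (cut out by the complex-balanced linear equalities together with strict positivity). The $\RR$-realizability constraint is linear in $\tilde{\bJ}$: writing the realization maps on $\RR^E$ and $\RR^{\tilde{E}}$ as linear maps $L$ and $\tilde{L}$ into $\bigoplus_{\by_0} \RR^n$, the condition is $\tilde{L}(\tilde{\bJ}) \in \operatorname{Image}(L)$, i.e. $\tilde{\bJ} \in \tilde{L}^{-1}(\operatorname{Image}(L))$, a linear subspace. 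Thus $\mJe(\tilde{G}, G)$ is the intersection of a convex cone with a linear subspace, hence a convex cone, hence path-connected; the remainder of~(a) then follows exactly as in~(b).

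Since the bulk of the proof merely packages standard topology (convex $\Rightarrow$ path-connected, together with the stability of path-connectedness under finite products and homeomorphisms), the only step needing genuine attention is the convexity of $\mJe(\tilde{G}, G)$, which the excerpt records only for the positive-realizability set $\mJ(\tilde{G}, G)$. I expect that to be the main---though mild---obstacle, with everything else being formal.
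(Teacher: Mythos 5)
Your proposal is correct and follows essentially the same route as the paper: transfer path-connectedness through the homeomorphism of Theorem~\ref{thm:sub_homeo} from the product of the convex cone $\mJ(\tilde{G},G)$ (resp.\ $\mJe(\tilde{G},G)$) with the convex set $(\bx_0+\mS_{\tilde G})\cap\RR^n_{>0}$, and read off the non-emptiness equivalence from bijectivity. The paper simply asserts that part~(a) is analogous, whereas you explicitly verify convexity of $\mJe(\tilde{G},G)$ as the intersection of $\mJ(\tilde{G})$ with the linear subspace $\tilde{L}^{-1}(\operatorname{Image}(L))$ --- a small gap in the paper's exposition that you fill correctly.
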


\begin{proof}

We only prove part $(b)$ since part $(a)$ follows analogously. First, suppose $\mJ(\tilde{G}, G) = \emptyset$. From Theorem \ref{thm:sub_homeo}, $\mK(\tilde{G}, G)$ is homeomorphic to the product space $\mJ(\tilde{G}, G)\times[(\bx_0 + \mS_{\tilde{G}})\cap\mathbb{R}^n_{>0}]$. Thus we derive that $\mK(\tilde{G}, G) = \emptyset$. 

\smallskip

Next, suppose $\mJ(\tilde{G}, G) \neq \emptyset$.
From Definition and Lemma \ref{lem:convex_cone}, $\mJ(\tilde{G}, G)$ and $(\bx_0 + \mS_{\tilde{G}}) \cap \mathbb{R}^n_{>0}$ are both path-connected.
Using Theorem \ref{thm:sub_homeo}, $\mK(\tilde{G}, G)$ is homeomorphic to the product space $\mJ(\tilde{G}, G)\times[(\bx_0 + \mS_{\tilde{G}})\cap\mathbb{R}^n_{>0}]$, we conclude $\mK(\tilde{G}, G) \neq \emptyset$ and $\mK(\tilde{G}, G)$ is path-connected.
\end{proof}

The following is another consequence of Theorem \ref{thm:homeo} and Lemma \ref{lem:fiber}.

\begin{lemma}
Consider a weakly reversible E-graph $G = (V, E)$ and its stoichiometric subspace $\mSG$. For any $\bk^* \in \mK(G)$ and $\bx_0 \in \mathbb{R}^n_{>0}$, the mass-action system $(G, \bk^*)$ has a unique steady state $\bx^* \in (\bx_0 + S_{G})\cap\mathbb{R}^n_{>0}$.
Define
\begin{equation}
\mathcal{K}_{G} (\bk^*, \bx^*) := \{ \bk_{G}^{*} (\bx, \bx^*) \ | \ \bx \in (\bx_0 + \mSG )\cap\mathbb{R}^n_{>0} \},
% \ \text{with } \ \bk_{G}^{*} (\bx, \bx^*)
% = \left(  \frac{k^*_{\by\rightarrow \by'}{(\bx^*)}^{\by}}{\bx^{\by}} \right)_{\by\rightarrow \by'\in E}.
\end{equation}
where
\begin{equation} \notag
\bk_{G}^{*} (\bx, \bx^*)
= \left(  \frac{k^*_{\by\rightarrow \by'}{(\bx^*)}^{\by}}{\bx^{\by}} \right)_{\by\rightarrow \by'\in E}.
\end{equation}
Then $\mathcal{K}_{G} (\bk^*, \bx^*)$ is homeomorphic to the stoichiometric compatibility class $(\bx_0 + S_{G})\cap\mathbb{R}^n_{>0}$.
\end{lemma}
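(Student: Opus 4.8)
The plan is to realize the fiber map $\bx \mapsto \bk_{G}^{*}(\bx, \bx^*)$ as a single slice of the product homeomorphism $\varphi$ from Theorem~\ref{thm:homeo}, so that the assertion reduces to restricting $\varphi$ to a fixed flux vector. First I would dispose of the uniqueness claim. Since $\varphi : \mJ(G) \times [(\bx_0 + \mSG) \cap \mathbb{R}^n_{>0}] \to \mK(G)$ is a bijection, the given $\bk^* \in \mK(G)$ has exactly one preimage, say $(\bJ^*, \bx^*)$, and its second coordinate is a complex-balanced steady state of $(G, \bk^*)$ lying in $(\bx_0 + \mSG) \cap \mathbb{R}^n_{>0}$. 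Because a complex-balanced system admits exactly one positive steady state in each stoichiometric compatibility class, this $\bx^*$ is the unique steady state named in the statement.

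The key step is to exhibit a \emph{fixed} complex-balanced flux vector that produces the entire fiber. I would set $\bJ^* = (J^*_{\by \to \by'})_{\by \to \by' \in E}$ with $J^*_{\by \to \by'} := k^*_{\by \to \by'}(\bx^*)^{\by}$. Because $\bx^*$ is a complex-balanced steady state of $(G, \bk^*)$, the balance relation at each vertex $\by_0 \in V$ reads $\sum_{\by_0 \to \by' \in E} k^*_{\by_0 \to \by'}(\bx^*)^{\by_0} = \sum_{\by \to \by_0 \in E} k^*_{\by \to \by_0}(\bx^*)^{\by}$, which is precisely the condition $\sum_{\by_0 \to \by'} J^*_{\by_0 \to \by'} = \sum_{\by \to \by_0} J^*_{\by \to \by_0}$ defining membership $\bJ^* \in \mJ(G)$. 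With this flux vector fixed, the defining formula for the fiber vector becomes $\bk_{G}^{*}(\bx, \bx^*) = \big( J^*_{\by \to \by'}/\bx^{\by} \big)_{\by \to \by' \in E} = \varphi(\bJ^*, \bx)$ for every $\bx \in (\bx_0 + \mSG) \cap \mathbb{R}^n_{>0}$. Consequently $\mathcal{K}_{G}(\bk^*, \bx^*) = \varphi\big( \{\bJ^*\} \times [(\bx_0 + \mSG) \cap \mathbb{R}^n_{>0}] \big)$, i.e.\ the set in question is exactly the image of the slice $\{\bJ^*\} \times [(\bx_0 + \mSG) \cap \mathbb{R}^n_{>0}]$ under $\varphi$.

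Finally I would conclude by restriction. The inclusion $\bx \mapsto (\bJ^*, \bx)$ is a homeomorphism from $(\bx_0 + \mSG) \cap \mathbb{R}^n_{>0}$ onto the slice $\{\bJ^*\} \times [(\bx_0 + \mSG) \cap \mathbb{R}^n_{>0}]$, and $\varphi$ restricted to this slice is a homeomorphism onto its image by Theorem~\ref{thm:homeo}. Composing the two gives a continuous bijection $\bx \mapsto \varphi(\bJ^*, \bx)$ with continuous inverse, so $\mathcal{K}_{G}(\bk^*, \bx^*)$ is homeomorphic to $(\bx_0 + \mSG) \cap \mathbb{R}^n_{>0}$. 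I do not anticipate a serious obstacle: all of the analytic content (injectivity and bicontinuity of $\varphi$ and its inverse) is already packaged in Theorem~\ref{thm:homeo}, and the only genuine verification is the one-line check that $\bJ^*$ satisfies the complex-balanced flux condition, which is just a restatement of the complex-balancing of $\bx^*$.
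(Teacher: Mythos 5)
Your proposal is correct and follows essentially the same route as the paper: both fix the flux vector $\bJ^* = \left(k^*_{\by\rightarrow\by'}(\bx^*)^{\by}\right)_{\by\rightarrow\by'\in E}$, identify $\mathcal{K}_{G}(\bk^*,\bx^*)$ as the image of the slice $\{\bJ^*\}\times[(\bx_0+\mSG)\cap\mathbb{R}^n_{>0}]$ under the homeomorphism $\varphi$ of Theorem~\ref{thm:homeo}, and conclude by restriction. If anything, you are slightly more explicit than the paper in checking that $\bJ^*$ is complex-balanced and in deriving the uniqueness of $\bx^*$, but the underlying argument is the same.
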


\begin{proof}
From Theorem \ref{thm:homeo}, there exists the homeomorphism map $\varphi$, such that 
\begin{equation} \notag
\varphi : \mJ(G) \times [(\bx_0 + \mS_{G} )\cap\mathbb{R}^n_{>0}]  \mapsto \mK (G).
\end{equation}
Using Lemma \ref{lem:fiber}, we have for any $\bx \in (\bx_0 + \mS_{G} )\cap\mathbb{R}^n_{>0}$
\begin{equation} \notag
\bk_{G}^{*} (\bx, \bx^*) \in \mK (G),
\end{equation}
and $\bx$ is the unique steady state for the mass-action system 
$(G, \bk_{G}^{*} (\bx, \bx^*))$ in $(\bx_0 + \mS_{G} )\cap\mathbb{R}^n_{>0}$.

Now let $\bJ = ( \bk^*_{\by\rightarrow \by'}{(\bx^*)}^{\by} )_{\by\rightarrow \by'\in E}$, we can check that for any $\bx \in (\bx_0 + \mS_{G} )\cap\mathbb{R}^n_{>0}$,
\begin{equation}
\varphi (\bJ, \bx) = \bk_{G}^{*} (\bx, \bx^*).
\end{equation}
Thus, it suffices for us to prove 
\begin{equation} \label{varphi_sub_2}
\varphi \big( \{ \bJ \} \times [(\bx_0+\mS_{G})\cap\mathbb{R}^n_{>0}] \big) = \mathcal{K}_{G} (\bk^*, \bx^*).
\end{equation}
The rest of the proof follows directly from the steps in the proof of Theorem~\ref{thm:sub_homeo}. 
\end{proof}

%\textcolor{red}{Add ``Left is $\tilde{G} = (\tilde{V}, \tilde{E}$, right is $G = (V, E)$"}

%\textcolor{red}{The arrows on the left are overcrowded.}

% \textcolor{red}{The coordinate of $\by$.}

\begin{example}
\label{ex:1}

Consider two E-graphs in Figure~\ref{fig:disguised_toric_1}, we give an example of an application of Theorem~\ref{thm:connect}.
The vertices shown in both E-graphs are given by
\begin{equation}  
\by_1 = \begin{pmatrix} 0 \\ 3 \end{pmatrix}, \quad 
\by_2 = \begin{pmatrix} 2 \\ 1 \end{pmatrix}, \quad 
\by_3 = \begin{pmatrix} 1 \\ 2 \end{pmatrix}, \quad
\by_4 = \begin{pmatrix} 3 \\ 0 \end{pmatrix}.
\end{equation}

\begin{figure}[!ht]
\centering
\includegraphics[scale=0.4]{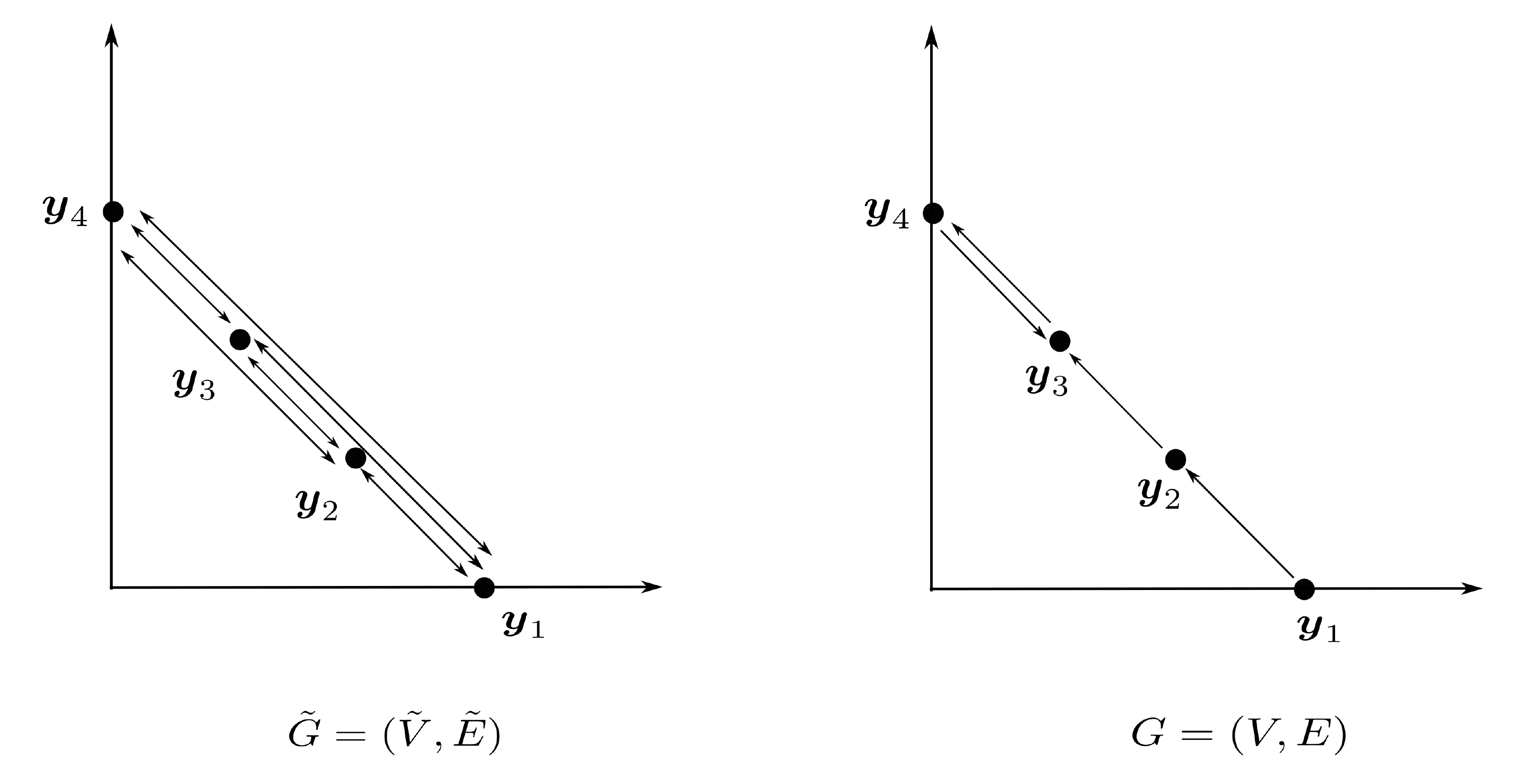}
\caption{\small Two E-graphs $\tilde{G} =(\tilde{V}, \tilde{E})$ and $G =(V, E)$. Note that $G \subseteq \tilde{G}$.}
\label{fig:disguised_toric_1}
\end{figure} 

For the weakly reversible E-graph $\tilde{G} = (\tilde{V}, \tilde{E})$, from Theorem \ref{thm:homeo} and Lemma \ref{lem:convex_cone} we get the path-connectedness on $\mK (\tilde{G})$.
Now consider the E-graph $G = (V, E)$. 

\begin{enumerate}[(a)]

\item First, we claim that
\[
\mKe (\tilde{G}, G) = \mK (\tilde{G}).
\]
For any $\tilde{\bk} \in \mK (\tilde{G})$, we set $\bk \in \RR^{|E|}$ as
\begin{equation}
\begin{split}
& \bk_{12} = \tilde{\bk}_{12} + 2 \tilde{\bk}_{13} + 3 \tilde{\bk}_{14},
\ \ \bk_{23} = \tilde{\bk}_{23} - \tilde{\bk}_{21} + 2 \tilde{\bk}_{24}.
\\& \bk_{34} = \tilde{\bk}_{34} - \tilde{\bk}_{32} - 2 \tilde{\bk}_{31},
\ \ \ \bk_{43} = \tilde{\bk}_{43} + 2 \tilde{\bk}_{42} + 3 \tilde{\bk}_{41}.
\end{split}
\end{equation}
From Definition \ref{def:de}, we can compute that $(\tilde{G}, \tilde{\bk}) \sim (G, \bk)$ and thus prove the claim.
Therefore, we conclude the path-connectedness on $\mKe (\tilde{G}, G)$.

\item Second, we consider $\mK (\tilde{G}, G)$.
For any $\tilde{\bk} \in \mK (\tilde{G})$, to ensure $(\tilde{G}, \tilde{\bk}) \sim (G, \bk)$ we set 
\begin{equation}
\begin{split}
& \bk_{12} = \tilde{\bk}_{12} + 2 \tilde{\bk}_{13} + 3 \tilde{\bk}_{14},
\ \ \bk_{23} = \tilde{\bk}_{23} - \tilde{\bk}_{21} + 2 \tilde{\bk}_{24}.
\\& \bk_{34} = \tilde{\bk}_{34} - \tilde{\bk}_{32} - 2 \tilde{\bk}_{31},
\ \ \ \bk_{43} = \tilde{\bk}_{43} + 2 \tilde{\bk}_{42} + 3 \tilde{\bk}_{41}.
\end{split}
\end{equation}
Moreover, in this case we need $\bk \in \RR^{|E|}_{>0}$, that is, 
\[
\tilde{\bk}_{23} + 2 \tilde{\bk}_{24} \geq \tilde{\bk}_{21}, \ \ 
\tilde{\bk}_{34} \geq \tilde{\bk}_{32} + 2 \tilde{\bk}_{31}.
\]
Thus we obtain that
\[
\mK (\tilde{G}, G) = \{ \tilde{\bk} \in \mK (\tilde{G}) \ \big| \ \tilde{\bk}_{23} + 2 \tilde{\bk}_{24} \geq \tilde{\bk}_{21} 
\ \text{ and } \
\tilde{\bk}_{34} \geq \tilde{\bk}_{32} + 2 \tilde{\bk}_{31} \}.
\]
Applying Theorem~\ref{thm:connect}, we conclude the path-connectedness on $\mK (\tilde{G}, G)$.
\end{enumerate}
\qed
\end{example}

\section{The Disguised Toric Locus is Path-Connected}
\label{sec:main_result}

In this section, we show the main results of this paper: both the disguised toric locus and the  $\RR$-disguised toric locus are \emph{path-connected}.

\begin{theorem}
\label{thm:dis_toric}

Consider an E-graph $G=(V,E)$. Then the $\RR$-disguised toric locus of $G$ is 
%either an empty set or it is 
path-connected.\footnote{Some authors exclude the empty set from being path-connected, but we do not follow this convention here.\label{empty}}
\end{theorem}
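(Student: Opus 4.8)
The plan is to prove path-connectedness in two stages: first show that each individual ``piece'' $\dK(G,\tilde G)$ (over a weakly reversible $\tilde G \sqsubseteq G_{\rm{comp}}$) is path-connected, and then glue the finitely many pieces together inside $\dK(G)$. If $\dK(G)=\emptyset$ there is nothing to prove (by the convention in the footnote), so I would assume it is nonempty, fix two points $\bk_1 \in \dK(G,\tilde G_1)$ and $\bk_2 \in \dK(G,\tilde G_2)$, and exhibit a path between them lying in $\dK(G)$.

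For the first stage, I would realize $\dK(G,\tilde G)$ as a continuous image of a path-connected set. Consider the incidence set $T := \{(\bk,\tilde\bk)\in\RR^E\times\mK(\tilde G) : (G,\bk)\sim(\tilde G,\tilde\bk)\}$. The projection $(\bk,\tilde\bk)\mapsto\tilde\bk$ sends $T$ onto $\mKe(\tilde G,G)$, which is path-connected by Theorem~\ref{thm:connect}(a). For each fixed $\tilde\bk$, the fiber is the solution set of the dynamical-equivalence equations~\eqref{eq:DE} in the unknown $\bk$, hence a nonempty affine subspace whose direction is independent of $\tilde\bk$ and for which a particular solution $\bk^{0}(\tilde\bk)$ can be chosen to depend linearly (thus continuously) on $\tilde\bk$. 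Consequently $T$ is homeomorphic to the product of $\mKe(\tilde G,G)$ with a fixed vector space and is therefore path-connected, and $\dK(G,\tilde G)$, being the image of $T$ under the continuous map $(\bk,\tilde\bk)\mapsto\bk$, is path-connected.

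The second and harder stage is gluing pieces indexed by different realizing graphs, which do not obviously intersect. The key device is a \emph{fade-in} construction on the union graph. I would set $\tilde G_3 := \tilde G_1\cup\tilde G_2$; since each edge of $\tilde G_i$ already lies on an oriented cycle inside $\tilde G_i\subseteq\tilde G_3$, every edge of $\tilde G_3$ lies on an oriented cycle, so $\tilde G_3$ is weakly reversible and $\tilde G_3\sqsubseteq G_{\rm{comp}}$, making $\dK(G,\tilde G_3)$ one of the pieces. Writing $(G,\bk_1)\sim(\tilde G_1,\tilde\bk_1)$ with complex-balanced steady state $\bx_1$ and $(G,\bk_2)\sim(\tilde G_2,\tilde\bk_2)$ with steady state $\bx_2$, I would apply Lemma~\ref{lem:fiber} to replace $\tilde\bk_2$ by the complex-balanced vector $\tilde\bk_2^{(1)} := \bk_{\tilde G_2}^{*}(\bx_1,\bx_2)$, which shares the steady state $\bx_1$. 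Crucially, this fiber move leaves the underlying flux vector unchanged, so by Theorem~\ref{thm:sub_homeo} (equivalently Proposition~\ref{prop:craciun2020efficient}) it stays $\RR$-realizable on $G$, say by some $\bk_2'\in\RR^E$. Extending $\tilde\bk_1$ and $\tilde\bk_2^{(1)}$ by zero to $\tilde G_3$ and summing, the rate vector $\hat{\tilde\bk}_1 + t\,\hat{\tilde\bk}_2^{(1)}$ is complex-balanced at the common state $\bx_1$ (complex balance being linear in the rates) and is strictly positive on all of $\tilde G_3=\tilde G_1\cup\tilde G_2$ for $t>0$; hence $\bk_1+t\bk_2'\in\dK(G,\tilde G_3)$ for $t>0$ while equaling $\bk_1$ at $t=0$. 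This produces a path in $\dK(G)$ from $\bk_1$ to a point of $\dK(G,\tilde G_3)$; symmetrically $\bk_2$ joins a point of $\dK(G,\tilde G_3)$, and the first stage connects these two endpoints within the path-connected set $\dK(G,\tilde G_3)$.

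The main obstacle is precisely this gluing step, and more specifically the need to preserve two properties \emph{simultaneously} along the connecting path: the perturbed system must remain complex-balanced with all-positive rates on $\tilde G_3$ (which forces the steady-state alignment supplied by Lemma~\ref{lem:fiber}, and forces the use of the union graph so that no edge is left permanently at rate zero), and it must remain realizable on the original graph $G$ (which is exactly what the flux-preservation of the fiber map, read through Theorem~\ref{thm:sub_homeo}, guarantees). Once these two invariants are maintained, concatenating the two fade-in paths with a path inside the single piece $\dK(G,\tilde G_3)$ joins $\bk_1$ to $\bk_2$, and since this works for every pair of points, $\dK(G)$ is path-connected.
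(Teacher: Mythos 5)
Your proof is correct, and its core gluing mechanism is the same as the paper's: use Lemma~\ref{lem:fiber} to rescale one complex-balanced realization so that the two realizations share a steady state (noting that this rescaling preserves the flux vector, hence $\RR$-realizability on $G$), pass to the union graph $\tilde G_1\cup\tilde G_2$, which is weakly reversible and contained in $G_{\rm{comp}}$, and exploit the linearity of both the complex-balance condition at a fixed state and the dynamical-equivalence equations. Where you differ is in the surrounding organization. The paper does not separately prove that each piece $\dK(G,\tilde G)$ is path-connected; instead it moves \emph{both} endpoints $\bk^a,\bk^b$ along explicit fiber paths $\mathcal{K}_{(G,\bk)}(\bx_i)$ (which stay inside $\dK(G,G_i)$) until their realizations share the reference state $\bx_0$, and then joins the two adjusted points by the straight segment $\alpha\bk^a_G(\bx_0,\bx_1)+(1-\alpha)\bk^b_G(\bx_0,\bx_2)$, realized on the union graph by the convex combination of the rescaled rate vectors. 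You instead establish path-connectivity of each piece up front, by fibering the incidence set $T$ over $\mKe(\tilde G,G)$ (invoking Theorem~\ref{thm:connect} and the fact that the fibers are parallel affine subspaces with a continuously chosen base point) and projecting; you then only need a one-sided ``fade-in'' $\bk_1+t\bk_2'$ into $\dK(G,\tilde G_1\cup\tilde G_2)$ from each endpoint. Your Stage 1 buys a reusable statement (each $\dK(G,\tilde G)$ is path-connected, which the paper only records as a remark after the theorem) at the cost of the continuous-selection argument for the particular solution; the paper's fiber-path construction is more hands-on but avoids any appeal to the product structure of Section~\ref{sec:homeomorphism} in this proof. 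Both arguments are sound.
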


\begin{proof}

If the $\RR$-disguised toric locus of $G$ is an empty set, it is path-connected. Else, we proceed as follows:

\textbf{Step 1:}
Recall that for any weakly reversible subgraph $G_i$ of $ G_{\rm{comp}}$, the $\RR$-disguised toric locus of $G$ is given by
\begin{equation} \notag
\dK (G) = \bigcup_{G_i \sqsubseteq G_{\rm{comp}}} \ \dK(G, G_i).
\end{equation}

If $\dK (G) = \emptyset$, then we prove the theorem. Otherwise, suppose a reaction rate vector $\bk \in \dK (G)$. Let $\bk \in \dK(G, G_1)$ with $G_1$ is a weakly reversible subgraph of $ G_{\rm{comp}}$, and there exists a reaction rate vector $\bk_1 \in \mK (G_1)$, such that
$(G, \bk) \sim (G_1, \bk_1)$.

Consider a fixed state $\bx_0\in\mathbb{R}^n_{>0}$, the mass-action system $(G_1, \bk_1)$ has a unique steady state $\bx_1 \in (\bx_0 + S_{G})\cap\mathbb{R}^n_{>0}$.
Then we define the following set of reaction vectors:
\begin{equation} \notag
\mathcal{K}_{(G,\bk)} (\bx_1) := \{ \bk_{G} (\bx, \bx_1) \in \RR^E \ | \ \bx \in (\bx_0 + \mSG )\cap\mathbb{R}^n_{>0} \},
\end{equation}
where
\begin{equation} \notag
 \bk_{G} (\bx, \bx_1)
= (k_{\by\rightarrow \by'} (\bx, \bx_1))_{\by\rightarrow \by'\in E}
= \left(  \frac{k_{\by\rightarrow \by'}{(\bx_1)}^{\by}}{\bx^{\by}} \right)_{\by\rightarrow \by'\in E}.
\end{equation}

Here we claim that $\mathcal{K}_{(G,\bk)} (\bx_1) \subseteq \dK(G, G_1)$.
It suffices to show for any $\bx \in (\bx_0 + \mSG )\cap \mathbb{R}^n_{>0}$,
$\bk_{G} (\bx, \bx_1) \in \dK(G, G_1)$.
Since $(G, \bk) \sim (G_1, \bk_1)$, 
for every vertex $\by_0 \in V \cup V_1$
\begin{equation} \notag
\sum_{\by_0 \to \by \in E} k_{\by_0  \to \by} (\by - \by_0) 
= \sum_{\by_0 \to \by' \in E_1} k_{1, \by_0  \to \by'}  (\by' - \by_0).
\end{equation}
Since $\bx \in (\bx_0 + \mSG )\cap\mathbb{R}^n_{>0}$, for every vertex $\by_0 \in V \cup V_1$
\begin{equation} \notag
\sum_{\by_0 \to \by \in E} k_{\by_0  \to \by} 
 \frac{(\bx_1)^{\by} }{\bx^{\by}} (\by - \by_0) 
= \sum_{\by_0 \to \by' \in E_1} k_{1, \by_0  \to \by'} \frac{(\bx_1)^{\by} }{\bx^{\by}} (\by' - \by_0).
\end{equation}
Thus, we derive that
\begin{equation} \label{eq:k(x)}
(G, \bk_{G} (\bx, \bx_1)) \sim (G_1, \bk_1 (\bx) )
\ \text{ with } \
k_{1, \by\rightarrow \by'} (\bx) = \frac{k_{1, \by\rightarrow \by'}{(\bx_1)}^{\by}}{\bx^{\by}}.
\end{equation} 
Using Lemma \ref{lem:fiber}, we have 
\begin{equation} \label{eq:k(x)K(G_1)}
\bk_1 (\bx) \in \mK (G_1)
\ \text{ and } \
\bx \text{ is a complex-balanced steady state for }
(G_1, \bk_1 (\bx) )
\end{equation}
% $\bk_1 (\bx) \in \mK (G_1)$ and $\bx$ is a complex-balanced steady state for $(G_1, \bk_1 (\bx) )$. 
Therefore, $\bk_{G} (\bx, \bx_1) \in \dK(G, G_1)$ and we prove the claim.

\medskip

\textbf{Step 2:}
Suppose two reaction rate vectors $\bk^a, \bk^b \in \dK (G)$. Let $\bk^a \in \dK(G, G_1)$ and $\bk^b \in \dK(G, G_2)$, where $G_1$ and $G_2$ are weakly reversible subgraphs of $ G_{\rm{comp}}$.
There exist two corresponding reaction rate vectors $\bk_1 \in \mK (G_1)$ and $\bk_2 \in \mK (G_2)$, such that 
\begin{equation} \label{eq:dis_toric_0}
(G, \bk^a) \sim (G_1, \bk_1)
\ \text{ and } \
(G, \bk^b) \sim (G_2, \bk_2).
\end{equation}
Further, we suppose $(G_1, \bk_1)$ and $(G_2, \bk_2)$ share one complex-balanced steady state $\bx_0\in\mathbb{R}^n_{>0}$. Now we claim that
\begin{equation} \notag
L (\bk^a, \bk^b) := \{ \alpha \bk^a + (1 - \alpha) \bk^b \ | \ 0 \leq \alpha \leq 1 \}
\subset \dK (G).
\end{equation}
Note that when $\alpha = 0, 1$, we have $\bk^a, \bk^b \in \dK (G)$.

From Definition \ref{def:cb_system} and $\bx_0$ is a complex-balanced steady state for $(G_1, \bk_1)$ and $(G_2, \bk_2)$, we have for every vertex $\by_0 \in V_1 \cup V_2$
\begin{equation} 
\begin{split} \label{eq:dis_toric_1}
& \sum_{\by_0 \to \by' \in E} k_{1,\by_0 \to \by'} (\bx_0)^{\by_0}
= \sum_{\by \to \by_0 \in E} k_{1, \by \to \by_0}
(\bx_0)^{\by}, 
\\& \sum_{\by_0 \to \by' \in E} k_{2, \by_0 \to \by'} (\bx_0)^{\by_0}
= \sum_{\by \to \by_0 \in E} k_{2, \by \to \by_0}
(\bx_0)^{\by}.
\end{split}
\end{equation}
Define an E-graph $G_{1,2} = (V_{1,2}, E_{1,2})$ with $V_{1,2} := V_1 \cup V_2$ and $E_{1,2} := E_1 \cup E_2$.
It is clear $G_{1,2}$ is a weakly reversible subgraph of $G_{\rm{comp}}$.
Given a fixed number $0 < \alpha < 1$, from \eqref{eq:dis_toric_1} we derive that for every vertex $\by_0 \in V_{1,2}$,
\begin{equation} \notag
\sum_{\by_0 \to \by' \in E_{1,2}} \left( \alpha k_{1, \by_0 \to \by'} + (1-\alpha)  k_{2, \by_0 \to \by'} \right) (\bx_0)^{\by_0}
= \sum_{\by \to \by_0 \in E_{1,2}} \left( \alpha k_{1, \by \to \by_0} + (1-\alpha)  k_{2, \by \to \by_0} \right)
(\bx_0)^{\by}.
\end{equation}
This shows that $\alpha \bk_1 + (1-\alpha) \bk_2 \in \mK (G_{1, 2})$.
On the other hand, from \eqref{eq:dis_toric_0} we can check 
\begin{equation} \notag
(G, \alpha \bk^a + (1-\alpha) \bk^b) \sim (G_{1,2}, \alpha \bk_1 + (1-\alpha) \bk_2),
\end{equation}
and thus conclude $\alpha \bk^a + (1-\alpha) \bk^b \in \dK (G)$.

\medskip

\textbf{Step 3:}
Now we show $\dK (G)$ is path-connected.
Consider any two reaction rate vectors in $\dK (G)$ such that
\[
\bk^a \in \dK(G, G_1)
\ \text{ and } \
\bk^b \in \dK(G, G_2),
\]
where
% $\bk^a, \bk^b \in \dK (G)$ 
% where $\bk^a \in \dK(G, G_1)$, $\bk^b \in \dK(G, G_2)$ and 
$G_1$ and $G_2$ are weakly reversible subgraphs of $ G_{\rm{comp}}$.
Let $\bk_1 \in \mK (G_1)$ and $\bk_2 \in \mK (G_2)$ such that $(G, \bk^a) \sim (G_1, \bk_1)$ and $(G, \bk^b) \sim (G_2, \bk_2)$.
For a fixed state $\bx_0\in\mathbb{R}^n_{>0}$, the mass-action systems $(G_1, \bk_1)$ and $(G_2, \bk_2)$ have steady states $\bx_1$ and $\bx_2$ in
$(\bx_0 + S_{G})\cap\mathbb{R}^n_{>0}$, respectively.

In step 1, we construct $\mathcal{K}_{(G,\bk^a)} (\bx_1)$ and $\mathcal{K}_{(G,\bk^b)} (\bx_2)$. It is clear that both of them are path-connected. From $\bx_0\in\mathbb{R}^n_{>0}$, \eqref{eq:k(x)} and \eqref{eq:k(x)K(G_1)}, we get
\begin{equation} \notag
(G_1, \bk_1 (\bx_0) ) \sim  (G, \bk^a_{G} (\bx_0, \bx_1)) 
\ \text{ and } \
(G_2, \bk_2 (\bx_0) ) \sim (G, \bk^b_{G} (\bx_0, \bx_2)),
\end{equation}
and $\bx_0$ is a steady state for both $(G_1, \bk_1 (\bx_0) )$ and $(G_2, \bk_2 (\bx_0) )$.
Further, in step 2 we prove 
\begin{equation*}
L (\bk^a_{G} (\bx, \bx_0), \bk^b_{G} (\bx, \bx_0)) \subset \dK (G).
\end{equation*}
Therefore, there exists a path connecting $\bk^a$ and $\bk^b$, and we prove this theorem.
\end{proof}

Using a similar argument as in Theorem \ref{thm:dis_toric}, we conclude the following remark:

\begin{remark}
Consider an E-graph $G = (V, E)$ and any weakly reversible subgraph $\tilde{G} \sqsubseteq G_{\rm{comp}}$. Then $\dK(G, \tilde{G})$ is 
%either an empty set or it is 
path-connected.
\end{remark}

The proof of the connectivity of the disguised toric locus is similar to the case of the $\RR$-disguised toric locus. We sketch the proof for completeness below.

%\textcolor{red}{footnote here!}

\begin{theorem}
\label{thm:ex_dis_toric}

Consider an E-graph $G=(V,E)$. Then the disguised toric locus of $G$ is %either an empty set or it is 
path-connected.\footref{empty}
\end{theorem}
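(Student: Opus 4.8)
The plan is to mirror the proof of Theorem~\ref{thm:dis_toric} almost verbatim, with the single modification that all reaction rate vectors must now be constrained to lie in $\RR_{>0}^E$ rather than $\RR^E$. Recall that by definition $\mKd(G) = \bigcup_{G_i \sqsubseteq G_{\rm{comp}}} \mKd(G, G_i)$, and that $\mKd(G, G_i) \subseteq \dK(G, G_i)$, so the disguised toric locus sits inside the $\RR$-disguised toric locus; the work is to check that positivity is preserved at each stage of the earlier argument. If $\mKd(G) = \emptyset$ we are done by the convention in footnote~\ref{empty}, so assume otherwise.

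First I would re-run Step~1. Given $\bk \in \mKd(G, G_1)$ with $(G, \bk) \sim (G_1, \bk_1)$ for some $\bk_1 \in \mK(G_1)$, and a fixed $\bx_0 \in \RR_{>0}^n$ with associated steady state $\bx_1$, I form the fiber set
\begin{equation} \notag
\mathcal{K}_{(G,\bk)}(\bx_1) := \{ \bk_{G}(\bx, \bx_1) \in \RR_{>0}^E \ | \ \bx \in (\bx_0 + \mSG)\cap\RR^n_{>0} \}.
\end{equation}
The crucial observation is that whenever $\bk \in \RR_{>0}^E$, the rescaled vector $\bk_{G}(\bx, \bx_1) = (k_{\by\to\by'}(\bx_1)^{\by}/\bx^{\by})_{\by\to\by'\in E}$ again has strictly positive entries, since it is obtained from $\bk$ by multiplication by the positive scalars $(\bx_1)^{\by}/\bx^{\by}$. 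Hence $\mathcal{K}_{(G,\bk)}(\bx_1) \subseteq \mKd(G, G_1)$ by exactly the dynamical-equivalence computation~\eqref{eq:k(x)}--\eqref{eq:k(x)K(G_1)} used before, and this set is path-connected because it is the continuous image of the path-connected set $(\bx_0 + \mSG)\cap\RR^n_{>0}$.

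Next I would reproduce Step~2, the convexity argument. Taking $\bk^a \in \mKd(G, G_1)$ and $\bk^b \in \mKd(G, G_2)$ with witnesses $\bk_1, \bk_2$ sharing a common complex-balanced steady state $\bx_0$, the convex combination $\alpha \bk_1 + (1-\alpha)\bk_2$ lies in $\mK(G_{1,2})$ for the union graph $G_{1,2}$, exactly as computed via~\eqref{eq:dis_toric_1}; and crucially, for $\bk^a, \bk^b \in \RR_{>0}^E$ the combination $\alpha \bk^a + (1-\alpha)\bk^b$ remains in $\RR_{>0}^E$ for all $\alpha \in [0,1]$ since the positive orthant is convex. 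Thus the whole segment $L(\bk^a, \bk^b)$ lies in $\mKd(G)$. Finally, Step~3 assembles these: given arbitrary $\bk^a, \bk^b \in \mKd(G)$, I use the Step~1 fibers to connect each to a vector whose witness has the common steady state $\bx_0$, then use the Step~2 segment to join those two; concatenating yields a path in $\mKd(G)$, proving path-connectedness.

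I do not expect a genuine obstacle here, since the entire argument is a positivity-preserving restriction of the already-established Theorem~\ref{thm:dis_toric}. The only point requiring care is verifying that each construction used in the $\RR$-case actually stays inside $\RR_{>0}^E$: namely that fiber rescaling preserves strict positivity (it does, as it multiplies by positive factors) and that convex combinations of positive vectors are positive (immediate). This is precisely why the authors say the proof is ``similar'' and only ``sketch'' it. The one substantive fact being invoked is Lemma~\ref{lem:fiber}, which already guarantees $\bk_1(\bx) \in \mK(G_1)$ with $\bx$ as its complex-balanced steady state, ensuring the membership claims in~\eqref{eq:k(x)K(G_1)} carry over unchanged.
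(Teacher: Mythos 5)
Your proposal is correct and follows essentially the same route as the paper's own proof, which likewise re-runs the three steps of Theorem~\ref{thm:dis_toric} with $\RR^E$ replaced by $\RR^E_{>0}$ and relies on the same two positivity observations (fiber rescaling multiplies by positive scalars, and the positive orthant is convex). No gaps.
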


\begin{proof}

If the disguised toric locus of $G$ is an empty set, it is path-connected. Else, we proceed in a way that is very similar to the proof of the previous theorem, as follows.

\smallskip

For any weakly reversible subgraph $G_i$ of $ G_{\rm{comp}}$, the disguised toric locus of $G$ is defined as
\begin{equation} \notag
\mKd (G) = \bigcup_{G_i \sqsubseteq G_{\rm{comp}}} \ \mK_{\rm{disg}}(G, G_i).
\end{equation}
The Theorem immediately follows if $\mKd (G) = \emptyset$. Otherwise, suppose a reaction rate vector $\bk \in \mKd (G) \subset \RR^E_{>0}$, and $\bk \in \mKd(G, G_1)$ with $G_1$ is a weakly reversible subgraph of $ G_{\rm{comp}}$. Then there exists a reaction rate vector $\bk_1 \in \mK (G_1)$, such that
$(G, \bk) \sim (G_1, \bk_1)$.

Consider a fixed state $\bx_0\in\mathbb{R}^n_{>0}$, the mass-action system $(G_1, \bk_1)$ has a unique steady state $\bx_1 \in (\bx_0 + S_{G})\cap\mathbb{R}^n_{>0}$.
Then we define the following set of reaction vectors:
\begin{equation} \notag
\mathcal{K}_{(G,\bk)} (\bx_1) := \{ \bk_{G} (\bx, \bx_1) \in \RR^E_{>0} \ | \ \bx \in (\bx_0 + \mSG )\cap\mathbb{R}^n_{>0} \},
\end{equation}
where
\begin{equation} \notag
 \bk_{G} (\bx, \bx_1)
= (k_{\by\rightarrow \by'} (\bx, \bx_1))_{\by\rightarrow \by'\in E}
= \left(  \frac{k_{\by\rightarrow \by'}{(\bx_1)}^{\by}}{\bx^{\by}} \right)_{\by\rightarrow \by'\in E}.
\end{equation}
Using a similar argument as in Theorem \ref{thm:dis_toric}, we conclude that $\mathcal{K}_{(G,\bk)} (\bx_1) \subseteq \mKd(G, G_1)$.

Suppose two reaction rate vectors $\bk^a, \bk^b \in \mKd (G)$. Let $G_1, G_2$ be two weakly reversible subgraphs of $ G_{\rm{comp}}$ and let $\bk^a \in \mKd(G, G_1)$ and $\bk^b \in \mKd(G, G_2)$.
There exist two corresponding reaction rate vectors $\bk_1 \in \mK (G_1)$ and $\bk_2 \in \mK (G_2)$, such that 
\begin{equation} \notag
(G, \bk^a) \sim (G_1, \bk_1)
\ \text{ and } \
(G, \bk^b) \sim (G_2, \bk_2).
\end{equation}
Further, we suppose $(G_1, \bk_1)$ and $(G_2, \bk_2)$ share one complex-balanced steady state $\bx_0\in\mathbb{R}^n_{>0}$. 
Again using a similar argument as in Theorem \ref{thm:dis_toric}, we get that
\begin{equation} \notag
L (\bk^a, \bk^b) := \{ \alpha \bk^a + (1 - \alpha) \bk^b \ | \ 0 \leq \alpha \leq 1 \}
\subset \mKd (G).
\end{equation}
Note that when $\alpha = 0, 1$, we have $\bk^a, \bk^b \in \mKd (G)$.

Finally, we show $\mKd (G)$ is path-connected.
Consider any two reaction rate vectors in $\mKd (G)$ such that
\[
\bk^a \in \mKd(G, G_1)
\ \text{ and } \
\bk^b \in \mKd(G, G_2),
\]
where $G_1$ and $G_2$ are weakly reversible subgraphs of $ G_{\rm{comp}}$.
Let $\bk_1 \in \mK (G_1)$ and $\bk_2 \in \mK (G_2)$ such that $(G, \bk^a) \sim (G_1, \bk_1)$ and $(G, \bk^b) \sim (G_2, \bk_2)$.
For a fixed state $\bx_0\in\mathbb{R}^n_{>0}$, the mass-action systems $(G_1, \bk_1)$ and $(G_2, \bk_2)$ have steady states $\bx_1$ and $\bx_2$ in
$(\bx_0 + S_{G})\cap\mathbb{R}^n_{>0}$, respectively.

Then we construct $\mathcal{K}_{(G,\bk^a)} (\bx_1)$ and $\mathcal{K}_{(G,\bk^b)} (\bx_2)$. 
From the steps above, both of them are path-connected and we have
\begin{equation} \notag
(G_1, \bk_1 (\bx_0) ) \sim  (G, \bk^a_{G} (\bx_0, \bx_1)) 
\ \text{ and } \
(G_2, \bk_2 (\bx_0) ) \sim (G, \bk^b_{G} (\bx_0, \bx_2)),
\end{equation}
and $\bx_0$ is a steady state for both $(G_1, \bk_1 (\bx_0) )$ and $(G_2, \bk_2 (\bx_0) )$.
We also recall that 
\begin{equation*}
L (\bk^a_{G} (\bx, \bx_0), \bk^b_{G} (\bx, \bx_0)) \subset \mKd (G).
\end{equation*}
Therefore, there exists a path connecting $\bk^a$ and $\bk^b$, and we prove this theorem.
\end{proof}

%\textcolor{red}{We may need to cite the result in Laura et al. paper.}

\begin{example}
\label{ex:2}

Revisit two E-graphs $\tilde{G} =(\tilde{V}, \tilde{E})$ and $G =(V, E)$ in Figure~\ref{fig:disguised_toric_1}. Now we consider the disguised toric locus $\mKd (G, \tilde{G})$ and the $\RR$-disguised toric locus $\dK (G, \tilde{G})$. Recall from Example \ref{ex:1},
suppose that $\tilde{\bk} \in \mK (\tilde{G})$, for any $\bk \in \mKd (G, \tilde{G})$ or $\dK (G, \tilde{G})$, we first need to ensure $(\tilde{G}, \tilde{\bk}) \sim (G, \bk)$. Under direct computation, we have
\begin{equation}
\begin{split}
& \bk_{12} = \tilde{\bk}_{12} + 2 \tilde{\bk}_{13} + 3 \tilde{\bk}_{14},
\ \ \bk_{23} = \tilde{\bk}_{23} - \tilde{\bk}_{21} + 2 \tilde{\bk}_{24}.
\\& \bk_{34} = \tilde{\bk}_{34} - \tilde{\bk}_{32} - 2 \tilde{\bk}_{31},
\ \ \ \bk_{43} = \tilde{\bk}_{43} + 2 \tilde{\bk}_{42} + 3 \tilde{\bk}_{41}.
\end{split}
\end{equation}

% \begin{figure}[H]
% \centering
% \includegraphics[scale=0.4]{disguised_graph.eps}
% \caption{\small \textcolor{red}{Two E-graphs $\tilde{G} =(\tilde{V}, \tilde{E})$ and $G =(V, E)$. Note that $G \subseteq \tilde{G}$.}}
% \label{fig:disguised_toric_2}
% \end{figure} 

\begin{enumerate}[(a)]

\item From~\cite[Theorem 4.3]{2022disguised}, we get that $\mKd (G, \tilde{G}) = \RR^{|E|}_{>0}$.
Thus for any $\bk \in \RR^{|E|}_{>0}$, we have
\[
(G, \bk) \sim (\tilde{G}, \tilde{\bk}),
\text{ for some } \tilde{\bk} \in \mK (\tilde{G}).
\]
It is clear that $\RR^{|E|}_{>0}$ is path-connected, and we conclude the path-connectedness on $\mKd (G, \tilde{G})$.

\item From~\cite[Theorem 4.3]{2022disguised}, for any $\bk \in \dK (G, \tilde{G})$, it needs to satisfy 
\begin{equation}
\bk_{12} > 0, \ \ \bk_{43} > 0.
\end{equation}
Further, if $\bk_{34} > 0 > \bk_{23}$, then it also needs that
\begin{equation}
\bk_{12} \bk_{43} + \bk_{34} \bk_{23} \geq 0.
\end{equation}

It is clear that the region restricted by $\bk_{12}, \bk_{43} > 0$ and
\begin{equation} \notag
\begin{split}
\bk_{23}, \bk_{34} > 0
\ \text{ or } \
\bk_{23}, \bk_{34} < 0
\ \text{ or } \
\bk_{23} > 0 > \bk_{34},
\end{split}   
\end{equation}
is path-connected.
On the other hand, for any $\bk_{34} > 0 > \bk_{23}$, we set
\[
\bk_{12} = - \bk_{34} \bk_{23} / \bk_{43} > 0.
\]
This implies $\dK (G, \tilde{G})$ is path-connected when $\bk_{12}, \bk_{43}, \bk_{34} > 0$ and $\bk_{23} < 0$.
Therefore, we conclude the path-connectedness on $\dK (G, \tilde{G})$.
\end{enumerate}
\qed
\end{example}

\section{Discussion}
\label{sec:discussion}

The notions of \emph{toric locus} and \emph{disguised toric locus} of a reaction network $\mathcal N$ have been studied in depth in recent work~\cite{CraciunDickensteinShiuSturmfels2009, craciun2020structure,2022disguised}, due to the fact that they determine sets in the parameter space of $\mathcal N$ where the dynamics is guaranteed to be remarkably stable~\cite{anderson2011proof, craciun2013persistence, craciun2020efficient, feinberg2019foundations, yu2018mathematical}. The \emph{toric locus} is the set of parameter values of $\mathcal N$ that generate {\em complex-balanced} dynamical systems; while the \emph{disguised toric locus} is a larger set in the parameter space of $\mathcal N$, which generate dynamical systems that are {\em realizable} as complex-balanced systems, possibly by using another network $\mathcal N'$. In particular, it has been shown that the toric locus is connected and that steady states of toric dynamical systems depend continuously  on the rate constants of the network~\cite{craciun2020structure}. Further, it is also known that many reaction networks that possess toric locus of measure zero; but a disguised toric locus having positive measure~\cite{2022disguised}. In particular, in previous work~\cite{disg_1}, we have established a lower bound on the dimension of the disguised toric locus; often, this lower bound is {\em equal} to the dimension of the parameter space of  $\mathcal N$, which is one way to conclude that the disguised toric locus has positive measure.

In this paper, we study several topological properties of the toric locus and the disguised toric locus. In particular, we establish the product structure of some relevant subsets of the toric locus  via certain homeomorphisms in Section~\ref{sec:homeomorphism}. Our main result is that the disguised toric locus is path connected (Theorem~\ref{thm:dis_toric}). This is useful since it can be exploited in numerical and homotopy methods for tracking the steady states of a system~\cite{bates2023numerical,breiding2018homotopycontinuation, collins2021singular, sommese2005numerical}. 

This work opens up avenues for a more comprehensive analysis of special regions of the parameter space of a reaction network; as a recent example of related work, where the focus is on instability/multistability as opposed to stability, see~\cite{telek_feliu_2023}.

\section*{Acknowledgements} 

This work was supported in part by the National Science Foundation grant DMS-2051568.

\bibliographystyle{unsrt}
\bibliography{Bibliography}

\end{document}